\documentclass[11pt, letterpaper]{amsart}
\usepackage[margin=1.5in]{geometry}

\usepackage[english]{babel}
\usepackage[utf8]{inputenc}
\usepackage[T1]{fontenc}
\usepackage{lmodern} 

\usepackage[colorinlistoftodos]{todonotes}

\usepackage{amsfonts} 
\usepackage{amssymb}
\usepackage{amsmath}
\usepackage{amsthm}
\usepackage{graphicx}
\usepackage{xifthen}
\usepackage{verbatim}
\usepackage{enumerate}
\usepackage{needspace}
\usepackage{mathtools}
\usepackage{xfrac}
\usepackage{tikz-cd}
\usepackage{lipsum}

\usepackage[colorlinks=true, allcolors=blue]{hyperref}

\usepackage[nameinlink]{cleveref} 

\DeclareRobustCommand{\SkipTocEntry}[5]{}

\newtheorem{theorem}{Theorem}[section]
\newtheorem{proposition}[theorem]{Proposition}
\newtheorem{corollary}[theorem]{Corollary}
\newtheorem{lemma}[theorem]{Lemma}

\theoremstyle{definition}
\newtheorem{definition}[theorem]{Definition}

\theoremstyle{definition}
\newtheorem{example}[theorem]{Example}

\theoremstyle{definition}
\newtheorem{remark}[theorem]{Remark}

\crefname{chapter}{chapter}{chapters}
\Crefname{chapter}{Chapter}{Chapters}
\crefname{section}{section}{sections}
\Crefname{section}{Section}{Sections}
\crefname{subsection}{section}{sections}
\Crefname{subsection}{Section}{Sections}
\crefname{subsubsection}{section}{sections}
\Crefname{subsubsection}{Section}{Sections}
\crefname{figure}{figure}{figures}
\Crefname{figure}{Figure}{Figures}
\crefname{table}{table}{tables}
\Crefname{table}{Table}{Tables}

\crefname{theorem}{theorem}{theorems}
\Crefname{theorem}{Theorem}{Theorems}
\crefname{proposition}{proposition}{propositions}
\Crefname{proposition}{Proposition}{Propositions}
\crefname{corollary}{corollary}{corollaries}
\Crefname{corollary}{Corollary}{Corollaries}
\crefname{lemma}{lemma}{lemmas}
\Crefname{lemma}{Lemma}{Lemmas}
\crefname{definition}{definition}{definitions}
\Crefname{definition}{Definition}{Definitions}
\crefname{conjecture}{conjecture}{conjectures}
\Crefname{conjecture}{Conjecture}{Conjectures}
\crefname{example}{example}{examples}
\Crefname{example}{Example}{Examples}
\crefname{remark}{remark}{remarks}
\Crefname{remark}{Remark}{Remarks}

\newcommand{\Z}{\mathbb{Z}}
\newcommand{\N}{\mathbb{N}}
\newcommand{\Q}{\mathbb{Q}}
\newcommand{\C}{\mathbb{C}}



\newcommand{\norm}[1]{ \left\| #1 \right\| }




\newcommand{\card}[1]{\left| #1 \right|}

\newcommand{\closure}[1]{\overline{#1}}

\newcommand{\set}[1]{\left\{#1\right\}}

\newcommand{\directsum}{\oplus}
\newcommand{\isoto}{\cong}
\newcommand{\boundary}{\partial}
\newcommand{\defeq}{\coloneqq}

\newcommand{\wtilde}[1]{\widetilde{#1}}

\newcommand{\actson}{\curvearrowright}

\newcommand{\tensor}{\otimes}
\newcommand{\sdprod}{\rtimes}

\DeclareMathOperator{\conv}{conv}

\DeclareMathOperator{\Sub}{Sub}
\DeclareMathOperator{\Fix}{Fix}
\newcommand{\normal}{\triangleleft}
\newcommand{\F}{\mathbb{F}}

\title{Relative C*-simplicity and characterizations for normal subgroups}
\author{Dan Ursu}
\address{Department of Pure Mathematics\\University of Waterloo\\
	200 University Avenue West, Waterloo, Ontario, N2L 3G1, Canada}
\email{dursu@uwaterloo.ca}
\subjclass[2010]{Primary 46L35; Secondary 37B05}
\keywords{group action, Furstenberg boundary, reduced C*-algebra, crossed product, simplicity}
\thanks{This work was supported by the Natural Sciences and Engineering Research Council of Canada (NSERC) [grant number PGSD3-535032-2019]. Cette recherche a \'{e}t\'{e} financ\'{e}e par le Conseil de recherches en sciences naturelles et en g\'{e}nie du Canada (CRSNG) [num\'{e}ro de subvention PGSD3-535032-2019].}

\begin{document}
		
	\begin{abstract}
		The notion of a plump subgroup was recently introduced by Amrutam. This is a relativized version of Powers' averaging property, and it is known that Powers' averaging property is equivalent to C*-simplicity. With this in mind, we introduce a relativized notion of C*-simplicity, and show that for normal subgroups it is equivalent to plumpness, along with several other characterizations.
	\end{abstract}

	\maketitle

	\tableofcontents

	\section{Introduction and statement of main results}
		
	Throughout this paper, unless specified otherwise, $G$ denotes a discrete group, $H$ a subgroup of $G$, $N$ a normal subgroup of $G$, and $A$ a C*-algebra equipped with an action of $G$ by *-automorphisms. The reduced group C*-algebra of $G$ is denoted by $C^*_\lambda(G)$, the canonical trace on $C^*_\lambda(G)$ by $\tau_\lambda$, and the reduced crossed product of $A$ and $G$ by $A \rtimes_\lambda G$. All topological $G$-spaces will be assumed to be compact and Hausdorff.
	
	A recent result of Amrutam \cite[Theorem~1.1]{amrutam_plump_subgroups} gives a sufficient condition for all intermediate C*-subalgebras $B$ satisfying $C^*_\lambda(G) \subseteq B \subseteq A \rtimes_\lambda G$ to be of the form $A_1 \rtimes_\lambda G$ for some $G$-C*-subalgebra $A_1 \subseteq A$. Namely, he introduces the notion of a \textit{plump subgroup}, and proves that the above intermediate subalgebra property holds if $G$ has the approximation property (AP), and the kernel of the action $G \actson A$ contains a plump subgroup of $G$. For convenience, we recall the definition here:
	
	\begin{definition}
	\label{definition_plump_subgroup}
		A subgroup $H \leq G$ is \textit{plump} if for any $\varepsilon > 0$ and any finite $F \subseteq G \setminus \set{e}$, there are $s_1, \dots, s_m \in H$ such that
		$$ \norm{\frac{1}{m} \sum_{j=1}^m \lambda_{s_j} \lambda_t \lambda_{s_j}^*} < \varepsilon \quad \forall t \in F. $$
	\end{definition}

	However, the following remark shows that, for \cite[Theorem~1.1]{amrutam_plump_subgroups}, it suffices to consider only normal subgroups:
	
	\begin{remark}
		Assume $H \leq K \leq G$, and $H$ is plump in $G$. Then it is clear that $K$ is also plump in $G$. In particular, the kernel of the action $G \actson A$ contains a plump subgroup of $G$ if and only if the kernel itself is plump.
	\end{remark}

	Sufficient characterizations of plumpness are given in \cite[Section~3]{amrutam_plump_subgroups}. Recall that, if $N$ is a normal subgroup of $G$, then the action of $N$ on its Furstenberg boundary $\boundary_F N$ extends uniquely to an action of $G$ - see the review given in \Cref{subsection_preliminaries_boundary_theory}. It is shown that if $N$ is C*-simple and has trivial centralizer in $G$, then $G$ acts freely on $\boundary_F N$, which in turn implies $N$ is plump in $G$ \cite[Corollary~3.2]{amrutam_plump_subgroups}. One of the results we will show is that the converses to these statements also hold:
	
	\begin{theorem}
	\label{normal_plump_subgroup_amrutam_equivalence}
		Assume $N \normal G$ is normal. The following are equivalent:
		\begin{enumerate}
			\item $N$ is plump in $G$.
			\item The action $G \actson \boundary_F N$ is free.
			\item There exists some $G$-minimal, $N$-strongly proximal, $G$-topologically free space.
			\item $N$ is C*-simple and $C_G(N) = \set{e}$.
			\item $G$ is C*-simple and $C_G(N) = \set{e}$.
		\end{enumerate}
	\end{theorem}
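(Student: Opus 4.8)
\medskip

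The plan is to take the implications $(4)\Rightarrow(2)\Rightarrow(1)\Rightarrow(4)$ as the core cycle, then to attach $(5)$ through $(1)\Rightarrow(5)\Rightarrow(4)$ and $(3)$ through $(2)\Rightarrow(3)\Rightarrow(4)$. The arrows $(4)\Rightarrow(2)$ and $(2)\Rightarrow(1)$ are exactly the results recalled above from \cite[Section~3]{amrutam_plump_subgroups}, so the new work is $(1)\Rightarrow(4)$, $(1)\Rightarrow(5)$, $(5)\Rightarrow(4)$, $(2)\Rightarrow(3)$, and above all $(3)\Rightarrow(4)$. Throughout I would freely use the following standard facts: Powers' averaging property is equivalent to C*-simplicity; $G$ is C*-simple if and only if it acts freely on $\boundary_F G$; the fixed-point set $\set{x : gx=x}$ of a group element in a Furstenberg boundary is open, so that a topologically free boundary action is automatically free and the acting group is C*-simple; and a normal subgroup of a C*-simple group is C*-simple.

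The implications issuing from $(1)$ are short. Restricting the definition of plumpness to finite subsets of $N\setminus\set{e}$ yields Powers' averaging property for $N$ with averaging elements taken inside $N$, so $N$ is C*-simple; restricting instead to finite subsets of $G\setminus\set{e}$ yields Powers' averaging for $G$, so $G$ is C*-simple. If $g\in C_G(N)\setminus\set{e}$, then plumpness applied to $F=\set{g}$ produces $s_1,\dots,s_m\in N$; but $\lambda_{s_j}\lambda_g\lambda_{s_j}^*=\lambda_{s_jgs_j^{-1}}=\lambda_g$ for each $j$, so the average has norm $1$, a contradiction. Hence $C_G(N)=\set{e}$, giving both $(1)\Rightarrow(4)$ and $(1)\Rightarrow(5)$. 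Then $(5)\Rightarrow(4)$ is immediate from the quoted fact on normal subgroups. For $(2)\Rightarrow(3)$, the space $X=\boundary_F N$ already witnesses $(3)$: it is $N$-strongly proximal by construction, its $N$-minimality makes it $G$-minimal (a closed $G$-invariant set is closed $N$-invariant), and by $(2)$ the $G$-action on it is free, hence topologically free.

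The substance is $(3)\Rightarrow(4)$. Let $X$ be $G$-minimal, $N$-strongly proximal, and $G$-topologically free. The first step is that $X$ has a \emph{unique} minimal $N$-invariant closed subset: if $Y_1,Y_2$ were two (necessarily disjoint) such sets, then for probability measures $\mu_i$ on $Y_i$ the measure $\mu=\tfrac12(\mu_1+\mu_2)$ has the feature that, by $N$-invariance of the $Y_i$ and upper semicontinuity of $\nu\mapsto\nu(Y_i)$, every weak$^{*}$ limit of measures in $N\mu$ assigns mass exactly $\tfrac12$ to each $Y_i$; thus the weak$^{*}$ closure of $N\mu$ contains no point mass, contradicting $N$-strong proximality. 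Since every $G$-translate of a minimal $N$-set is again a minimal $N$-set, this unique set $Y$ is $G$-invariant, so $G$-minimality forces $Y=X$; hence $X$ is an $N$-boundary. Its $N$-action is topologically free (being the restriction of the $G$-action), so $N$ is C*-simple. Finally, let $g\in C_G(N)$. As $g$ commutes with $N$, the homeomorphism $x\mapsto gx$ of $X$ is $N$-equivariant, so its composition with the canonical $N$-map $\phi\colon\boundary_F N\to X$ is again an $N$-map $\boundary_F N\to X$; uniqueness of maps between $N$-boundaries forces this composition to equal $\phi$, whence $g$ fixes $\phi(\boundary_F N)=X$ pointwise, and topological freeness of $X$ forces $g=e$. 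Thus $C_G(N)=\set{e}$, completing $(3)\Rightarrow(4)$.

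I expect the one genuine obstacle to be $(3)\Rightarrow(4)$: the hypothesis in $(3)$ is purely topological and dynamical while the conclusions ($N$ C*-simple, trivial centralizer) are algebraic, and some input is needed to bridge the two. The device that makes it work is the observation that $N$-strong proximality — which by itself does not give $N$-minimality — nonetheless forces $X$ to contain a unique minimal $N$-set, so that together with $G$-minimality the space $X$ is recognized as an $N$-boundary, that is, a $G$-equivariant image of $\boundary_F N$; after that, topological freeness transfers to $\boundary_F N$ and the remaining steps are routine. A secondary delicacy is that the centralizer computation relies on uniqueness of $N$-equivariant maps between $N$-boundaries, which is also what would be needed (via the usual intertwining argument) to see that $\phi$ is in fact $G$-equivariant for the extended action.
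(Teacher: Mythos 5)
Your proof is correct, but it routes the equivalences differently from the paper in two places, and the comparison is worth recording. For the main implication you argue $(3)\Rightarrow(4)$ directly: your uniqueness-of-the-minimal-$N$-set argument (via the measure $\tfrac12(\mu_1+\mu_2)$ and upper semicontinuity on closed sets) is exactly the content of the paper's \Cref{lemma_G_minimal_N_strongly_proximal_implies_N_minimal}, but from there you quote the absolute characterization ``topologically free on some boundary $\Rightarrow$ C*-simple'' (\Cref{c_simplicity_equivalence_dynamical}) to get C*-simplicity of $N$, and you kill $C_G(N)$ by observing that $x\mapsto gx$ is an $N$-endomorphism of the $N$-boundary $X$, so $g\circ\phi=\phi$ by uniqueness of boundary morphisms and $\Fix_X(g)=X$, contradicting topological freeness. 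The paper instead proves $(3)\Rightarrow(2)$: it upgrades the canonical map $\phi\colon\boundary_F N\to X$ to a \emph{$G$}-equivariant map by the same intertwining/uniqueness trick, and then transfers topological freeness of $X$ back to genuine freeness of $G\actson\boundary_F N$ via Frol\'ik's theorem (\Cref{frolik_theorem_furstenberg_boundary}, clopen fixed sets) and a covering argument (\Cref{lemma_topologically_free_boundary_implies_free_furstenberg_boundary}). Your route is more self-contained at this spot (no Frol\'ik, no covering lemma), but it treats as a black box precisely the absolute statement whose proof in \cite{breuillard_kalantar_kennedy_ozawa_c_simplicity} the paper points out needed a small correction — citing Kennedy's proof, or the paper's own lemma, is the honest way to discharge it — whereas the paper's route yields the stronger conclusion $(2)$ (freeness on $\boundary_F N$ itself) directly from $(3)$. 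Secondly, for $(1)\Rightarrow(4)/(5)$ your argument is more elementary than the paper's: you get C*-simplicity of $N$ and $G$ by restricting plumpness to Powers averages (as in \Cref{remark_plump_implies_c_simple}) and dispose of the centralizer by the one-line computation $\lambda_s\lambda_g\lambda_s^{*}=\lambda_g$ for $s\in N$, $g\in C_G(N)$, forcing the average to have norm $1$; the paper instead dualizes plumpness (\Cref{lemma_plump_dualization}) and exhibits the $N$-invariant state $1_K\circ E_K$ attached to a nontrivial amenable $K\leq C_G(N)$. Your computation is cleaner for this theorem; the paper's dualization lemma is developed anyway because it is needed for the later results (\Cref{lemma_plump_pseudoexpectation_equivalence} and \Cref{normal_plump_subgroup_relative_simplicity_equivalence}). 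The remaining arrows — $(4)\Rightarrow(2)\Rightarrow(1)$ from Amrutam, $(5)\Rightarrow(4)$ from \cite[Theorem~1.4]{breuillard_kalantar_kennedy_ozawa_c_simplicity}, and $(2)\Rightarrow(3)$ — coincide with the paper's.
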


	Setting $N = G$ in the above theorem gives back various equivalences between C*-simplicity and other characterizations. For a review of these characterizations, together with necessary definitions, see the review in \Cref{subsection_preliminaries_c_simplicity}.
	
	\begin{remark}
	\label{remark_plump_implies_c_simple}
		Plumpness is a relativized version of Powers' averaging property, and so setting $N = G$ in \Cref{normal_plump_subgroup_amrutam_equivalence}, we get back that $G$ is C*-simple if and only if it satisfies Powers' averaging property. In fact, if $G$ contains any (not necessarily normal) plump subgroup $H$, then we see that both $H$ and $G$ satisfy Powers' averaging property, and so both are C*-simple. Similarly, one also obtains the various dynamical characterizations of C*-simplicity by setting $N = G$.
	\end{remark}

	From here, it is natural to ask if plumpness is equivalent to some generalized notion of C*-simplicity. To answer this question, we introduce the notion of relative simplicity of C*-algebras, and using this, relative C*-simplicity for groups.
	
	\begin{definition}
		\label{definition_relative_c_simple}
		Assume $A$ is a unital C*-algebra, and $B \subseteq A$ is a unital sub-C*-algebra. We say that $B$ is \textit{relatively simple} in $A$ if any unital completely positive map $\phi : A \to B(\mathcal{H})$ which is a *-homomorphism on $B$ is faithful on $A$. Given $H \leq G$, we say that $H$ is \textit{relatively C*-simple} in $G$ if $C^*_\lambda(H)$ is relatively simple in $C^*_\lambda(G)$.
	\end{definition}

	\begin{theorem}
	\label{normal_plump_subgroup_relative_simplicity_equivalence}
		Assume $N \normal G$ is normal. The following are equivalent:
		\begin{enumerate}
			\item $N$ is plump in $G$.
			\item $N$ is relatively C*-simple in $G$.
			\item $C^*_\lambda(N)$ is relatively simple in $C(\boundary_F N) \rtimes_\lambda G$.
			\item $C(\boundary_F N) \rtimes_\lambda N$ is relatively simple in $C(\boundary_F N) \rtimes_\lambda G$.			
		\end{enumerate}
	\end{theorem}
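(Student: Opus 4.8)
The plan is to establish the four equivalences of Theorem~\ref{normal_plump_subgroup_relative_simplicity_equivalence} by a cycle, leveraging Theorem~\ref{normal_plump_subgroup_amrutam_equivalence} for the "plumpness" endpoint and a boundary-theoretic rigidity argument for the "relatively simple" endpoints. I would first prove $(4) \Rightarrow (3)$ and $(3) \Rightarrow (2)$, which should be the easy directions: the inclusions $C^*_\lambda(N) \subseteq C(\boundary_F N) \rtimes_\lambda N \subseteq C(\boundary_F N) \rtimes_\lambda G$ and $C^*_\lambda(N) \subseteq C^*_\lambda(G) \subseteq C(\boundary_F N) \rtimes_\lambda G$ mean that a ucp map witnessing failure of relative simplicity in the smaller ambient algebra can be composed with (or restricted from) a ucp map on the larger one. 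More carefully, for $(3)\Rightarrow(2)$ one takes a ucp $\phi\colon C^*_\lambda(G)\to B(\mathcal H)$ that is a $*$-homomorphism on $C^*_\lambda(N)$ but not faithful, and one extends it to $C(\boundary_F N)\rtimes_\lambda G$ using rigidity/injectivity of the boundary: because $\boundary_F N$ is $N$-injective (and the $G$-action is the canonical extension), $\phi$ extends to a ucp map still $*$-homomorphic on $C^*_\lambda(N)$, and non-faithfulness is inherited on the subalgebra. The direction $(4)\Rightarrow(3)$ is just restriction along the inclusion $C^*_\lambda(N)\subseteq C(\boundary_F N)\rtimes_\lambda N$.

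The substantive implication is $(2) \Rightarrow (1)$, i.e. that relative C*-simplicity of the normal subgroup $N$ forces plumpness. Here I would argue by contrapositive using the dynamical characterization from Theorem~\ref{normal_plump_subgroup_amrutam_equivalence}: if $N$ is not plump, then $G$ does not act freely on $\boundary_F N$, so there is a point $x \in \boundary_F N$ with nontrivial stabilizer $G_x \neq \set{e}$. Picking $t \in G_x \setminus \set{e}$, I want to build a ucp map on $C^*_\lambda(G)$ that is a $*$-homomorphism on $C^*_\lambda(N)$ but kills the nonzero element $\lambda_t - 1$ (or more precisely is not faithful). The natural candidate is to use the boundary action to produce a "boundary representation": evaluate at the point $x$ to get a state-valued construction $C(\boundary_F N) \rtimes_\lambda G \to B(\mathcal H)$ whose restriction to $C^*_\lambda(N)$ remains the regular representation (by $N$-freeness/properness of $N$ on $\boundary_F N$, which holds since $N$ is C*-simple is \emph{not} assumed — rather, one uses that $N$ always acts on $\boundary_F N$ with the regular representation embedding faithfully after crossed product), but which, near the fixed point $x$ of $t$, collapses $\lambda_t$ onto a multiplication operator, destroying faithfulness. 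This is the heart of the matter and mirrors the classical Kalantar--Kennedy/Haagerup argument that non-freeness of $G\actson\boundary_F G$ produces a non-faithful boundary state; the relative obstacle is ensuring the restriction to $C^*_\lambda(N)$ stays a genuine $*$-homomorphism, which should follow from normality of $N$ and the fact that $N$ acts on $\boundary_F N$ with the same regular-representation rigidity.

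For $(1) \Rightarrow (4)$, I would invoke Theorem~\ref{normal_plump_subgroup_amrutam_equivalence} to get that $G$ acts freely on $\boundary_F N$, hence $C(\boundary_F N)\rtimes_\lambda G$ is simple (freeness plus $G$-minimality plus $N$-strong proximality, via the standard boundary-simplicity theorem), and similarly $C(\boundary_F N)\rtimes_\lambda N$ is the reduced crossed product of the $N$-boundary by $N$, which is simple. Once the ambient algebra $C(\boundary_F N)\rtimes_\lambda G$ is \emph{simple}, relative simplicity of \emph{any} unital subalgebra is automatic: any ucp $\phi$ on a simple unital C*-algebra that is multiplicative on some subalgebra has kernel a closed two-sided ideal, necessarily $\set{0}$. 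Thus $(4)$ follows immediately, closing the cycle $(1)\Rightarrow(4)\Rightarrow(3)\Rightarrow(2)\Rightarrow(1)$. The main obstacle I anticipate is the $(2)\Rightarrow(1)$ step: constructing the non-faithful ucp map on $C^*_\lambda(G)$ that remains $*$-homomorphic on $C^*_\lambda(N)$ requires carefully combining the Hamana-type injective envelope / boundary machinery with the structure of $N$ as a normal subgroup, and verifying that the regular representation of $N$ survives the boundary evaluation while the element witnessing non-simplicity (coming from the fixed point of the $G$-action) does not.
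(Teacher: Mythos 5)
Your cycle breaks at both of its load-bearing links, so the proposal has genuine gaps rather than being a variant route. First, the direction of the ``easy'' implication between $(3)$ and $(4)$ is reversed. With \Cref{definition_relative_c_simple}, multiplicativity on a \emph{smaller} subalgebra is a \emph{weaker} hypothesis on the ucp map, so relative simplicity of a smaller subalgebra is the \emph{stronger} property: if $A \subseteq B \subseteq C$ and $A$ is relatively simple in $C$, then so is $B$ (this is \Cref{relative_simplicity_containment}$(3)$). Hence the trivial implication is $(3) \Rightarrow (4)$, not your claimed $(4) \Rightarrow (3)$: a ucp map on $C(\boundary_F N) \rtimes_\lambda G$ that is a *-homomorphism on $C^*_\lambda(N)$ need not be one on $C(\boundary_F N) \rtimes_\lambda N$, so ``restriction'' gives you nothing. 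Second, your proof of $(1) \Rightarrow (4)$ rests on the claim that simplicity of the ambient algebra makes relative simplicity of every unital subalgebra automatic because ``the kernel of the ucp map is a closed two-sided ideal.'' The kernel of a ucp map is not an ideal (only the left kernel $\set{a : \phi(a^*a) = 0}$ is a left ideal), and the claim is false: any non-faithful state on a simple unital C*-algebra is ucp and multiplicative on $\C 1$. Indeed, if the claim were true, C*-simplicity of $G$ alone would make every normal subgroup plump, contradicting \Cref{normal_plump_subgroup_amrutam_equivalence}, whose condition $(5)$ also requires $C_G(N) = \set{e}$.

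For comparison, the paper proves $(1) \Rightarrow (3)$ by a rigidity argument that your shortcut was meant to replace and which cannot be avoided: if $\phi$ is multiplicative on $C^*_\lambda(N)$, then $\phi$ is $N$-equivariant for the induced action on $B(\mathcal{H})$, and $N$-injectivity of $C(\boundary_F N)$ yields an $N$-equivariant ucp map $\psi : B(\mathcal{H}) \to C(\boundary_F N)$; plumpness forces $\psi \circ \phi$ to restrict to the canonical trace on $C^*_\lambda(G)$ (\Cref{lemma_plump_pseudoexpectation_equivalence}) and to the identity on $C(\boundary_F N)$ by rigidity, so $\psi \circ \phi$ is the canonical (faithful) conditional expectation and $\phi$ is faithful. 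As for $(2) \Rightarrow (1)$, your boundary-evaluation sketch leaves exactly the hard point open (producing a non-faithful ucp map that stays a *-homomorphism on $C^*_\lambda(N)$), and it would also have to handle the case where non-freeness of $G \actson \boundary_F N$ comes from $N$ itself, i.e.\ where $N$ is not C*-simple. The paper's route is more economical: $(2)$ already forces $C^*_\lambda(N)$ to be simple by \Cref{relative_simplicity_containment}, so by \Cref{normal_plump_subgroup_amrutam_equivalence} the only way $(1)$ can fail is $C_G(N) \neq \set{e}$; choosing a nontrivial amenable $K \leq C_G(N)$ gives $C^*_\lambda(NK) \isoto C^*_\lambda(N) \tensor C^*_\lambda(K)$, and any Arveson extension of $\lambda_N \tensor 1_K$ to $C^*_\lambda(G)$ is a non-faithful ucp map that is a *-homomorphism on $C^*_\lambda(N)$, contradicting $(2)$. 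The same tensor trick, using that $K$ acts trivially on $\boundary_F N$, gives $(4) \Rightarrow (1)$, which is how the paper closes the equivalences without ever needing your problematic $(4) \Rightarrow (3)$ or the ``simple ambient algebra'' principle.
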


	\begin{remark}
		For consistency, we will use the term \textit{relatively C*-simple} in place of \textit{plump} throughout the rest of this paper when it comes to normal subgroups.
	\end{remark}

	We may also ask what other characterizations of C*-simplicity generalize to an equivalent characterization of relative C*-simplicity. Kennedy's intrinsic characterization is one such result. For a review of this, along with a review of the Chabauty topology on the space of subgroups $\Sub(G)$, again see the review in \Cref{subsection_preliminaries_c_simplicity}.
	
	\begin{definition}
	\label{definition_relative_residual_normal_and_urs}
		Assume $H \leq G$. An \textit{$H$-uniformly recurrent subgroup} of $G$ is a (non-empty) closed $H$-minimal subset of $\Sub(G)$. It is called amenable if one (equivalently all) of its elements are amenable. It is called nontrivial if it is not $\set{\set{e}}$. A subgroup $K \leq G$ is called \text{$H$-residually normal} if the closed $H$-orbit of $K$ in $\Sub(G)$ does not contain the trivial subgroup $\set{e}$. Algebraically, $K \leq G$ is $H$-residually normal if and only if there exists a finite $F \subseteq G \setminus \set{e}$ such that $F \cap sKs^{-1} \neq \emptyset$ for any $s \in H$.
	\end{definition}

	\begin{theorem}
	\label{normal_plump_subgroup_residually_normal_equivalence}
		Assume $N \normal G$ is normal. The following are equivalent:
		\begin{enumerate}
			\item $N$ is relatively C*-simple in $G$.
			\item There is no amenable $N$-residually normal subgroup of $G$.
			\item There is no nontrivial amenable $N$-uniformly recurrent subgroup of $G$.
		\end{enumerate}
	\end{theorem}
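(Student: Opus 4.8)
The plan is to run the implications by combining the dynamical and algebraic characterizations already at hand. The equivalence of (2) and (3) is a soft statement about $\Sub(G)$; the substance is the link between (1) and (2), where I would show directly that $N$ is plump in $G$ --- equivalently, by \Cref{normal_plump_subgroup_relative_simplicity_equivalence}, relatively C*-simple --- precisely when $G$ has no amenable $N$-residually normal subgroup.

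For (2)$\iff$(3), I would use the standard facts that the set of amenable subgroups is closed in $\Sub(G)$, that $\Sub(N)$ sits as a closed $N$-invariant subspace of $\Sub(G)$ carrying the same Chabauty topology and conjugation action, and that $\set{\set{e}}$ is a closed $N$-invariant set. If $\mathcal{U}$ is a nontrivial amenable $N$-uniformly recurrent subgroup, then $\set{e} \notin \mathcal{U}$ (otherwise $N$-minimality forces $\mathcal{U} = \set{\set{e}}$), so every member of $\mathcal{U}$ is amenable with closed $N$-orbit contained in $\mathcal{U}$, hence $N$-residually normal. Conversely, given an amenable $N$-residually normal $K$, its closed $N$-orbit is a nonempty closed $N$-invariant set of amenable subgroups missing $\set{e}$, and a routine Zorn's lemma argument extracts inside it an $N$-minimal subset, which is a nontrivial amenable $N$-uniformly recurrent subgroup.

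The core of (1)$\Rightarrow$(2) is a short averaging estimate. Suppose $K \leq G$ is amenable and $N$-residually normal, and fix, via the algebraic form in \Cref{definition_relative_residual_normal_and_urs}, a finite $F_0 \subseteq G \setminus \set{e}$ with $F_0 \cap sKs^{-1} \neq \emptyset$ for every $s \in N$. Amenability of $K$ makes the quasi-regular representation of $G$ on $\ell^2(G/K)$ weakly contained in the left regular representation, so it extends to a $*$-representation $\pi$ of $C^*_\lambda(G)$. If $N$ were plump, applying \Cref{definition_plump_subgroup} to $F_0$ and $\varepsilon = \tfrac{1}{2\card{F_0}}$ yields $s_1, \dots, s_m \in N$ with $\norm{\tfrac{1}{m}\sum_{j} \lambda_{s_j}\lambda_t\lambda_{s_j}^*} < \varepsilon$ for all $t \in F_0$; reading off the $(\delta_K,\delta_K)$-matrix coefficient of $\pi$ gives $\card{\set{j : s_j t s_j^{-1} \in K}} < \varepsilon m$ for each $t$, so the number of pairs $(j,t) \in \set{1,\dots,m}\times F_0$ with $s_j t s_j^{-1} \in K$ is strictly less than $m/2$. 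But applying the residual-normality condition with $s = s_j^{-1}$ produces, for each $j$, some $t_j \in F_0$ with $s_j t_j s_j^{-1} \in K$, giving at least $m$ such pairs --- a contradiction. Hence $N$ is not plump, so by \Cref{normal_plump_subgroup_relative_simplicity_equivalence} not relatively C*-simple. (This direction uses only that $N$ is a subgroup, not that it is normal.)

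For (2)$\Rightarrow$(1) I would argue contrapositively, invoking \Cref{normal_plump_subgroup_amrutam_equivalence,normal_plump_subgroup_relative_simplicity_equivalence}: if $N$ is not relatively C*-simple, then either $C_G(N) \neq \set{e}$ or $N$ is not C*-simple. In the first case, a nontrivial $z \in C_G(N)$ generates a cyclic (hence amenable) subgroup with $s\langle z\rangle s^{-1} = \langle z\rangle$ for all $s \in N$, so $\langle z\rangle$ is an amenable $N$-residually normal subgroup of $G$. In the second case, the Furstenberg-boundary analysis of non-C*-simple groups recalled in \Cref{subsection_preliminaries_c_simplicity} applies: $N$ does not act topologically freely on $\boundary_F N$, so the stabilizer uniformly recurrent subgroup of $N \actson \boundary_F N$ is nontrivial; since point stabilizers of the Furstenberg boundary action are amenable, it is a nontrivial amenable $N$-uniformly recurrent subgroup living inside $\Sub(N) \subseteq \Sub(G)$, and any member of it is an amenable $N$-residually normal subgroup of $G$. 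Either way (2) fails. I expect the averaging estimate in (1)$\Rightarrow$(2) to be essentially routine once the quasi-regular representation is brought in; the genuine input is in (2)$\Rightarrow$(1), which leans on the (already reviewed) structure theory behind Kennedy's intrinsic characterization, and on keeping the Chabauty topology and conjugation action on $\Sub(N)$ compatible with those on $\Sub(G)$ so that ``recurrent subgroup of $N$'' and ``$N$-residually normal subgroup of $G$ contained in $N$'' can be used interchangeably.
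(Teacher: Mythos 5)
Your proof is correct, but the key implication $(1)\Rightarrow(2)$ is argued by a genuinely different route than the paper's. The paper works dynamically on $\boundary_F N$, following Kennedy's Remark~4.2: an amenable $N$-residually normal $K$ gives a $K$-invariant measure, strong proximality pushes it to a Dirac mass along a net in $N$, and a convergent subnet of the conjugates $s_\lambda K s_\lambda^{-1}$ produces a nontrivial element fixing a point, contradicting freeness of $G \actson \boundary_F N$ via \Cref{normal_plump_subgroup_amrutam_equivalence}. You instead run a direct Powers-type estimate: the quasi-regular representation on $\ell^2(G/K)$ extends to $C^*_\lambda(G)$ by amenability of $K$, and the $(\delta_K,\delta_K)$-coefficient of the plumpness averages over the finite set $F_0$ witnessing residual normality yields fewer than $m/2$ coincidences $s_j t s_j^{-1} \in K$, against the at least $m$ forced by residual normality. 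Your version buys something real: it never touches $\boundary_F N$ and, as you note, uses no normality, so it shows plumpness of an arbitrary subgroup $H$ rules out amenable $H$-residually normal subgroups; the paper's version is shorter given the boundary machinery already in place. Your $(2)\Rightarrow(1)$ and $(2)\iff(3)$ are essentially the paper's arguments; in the non-C*-simple case the paper simply quotes \Cref{c_simplicity_equivalence_intrinsic} for $N$ and transports the residually normal subgroup into $\Sub(G)$, whereas you unfold this through the stabilizer uniformly recurrent subgroup of $N \actson \boundary_F N$ --- that is fine, but it quietly uses amenability of stabilizers and the fact that $\set{N_x}_{x \in \boundary_F N}$ is a URS (facts the paper cites only in its examples section), so the direct appeal to Kennedy's characterization is the more economical citation.
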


	\addtocontents{toc}{\SkipTocEntry}
	\section*{Acknowledgements}
	
	The author would like to thank his supervisor, Matthew Kennedy, for giving detailed comments and suggestions throughout the development of this paper. In addition, the author would also like to thank Tattwamasi Amrutam, Mehrdad Kalantar, and Sven Raum for looking through a draft of this paper and giving helpful feedback.

	\section{Preliminaries}
	\label{section_preliminaries}
	
	\subsection{Boundary theory}
	\label{subsection_preliminaries_boundary_theory}
	
	Boundary theory was originally developed by Furstenberg in \cite{furstenberg_boundary_theory},
	and played an important role in \cite{kalantar_kennedy_boundaries} and \cite{breuillard_kalantar_kennedy_ozawa_c_simplicity}, which study C*-simplicity of discrete groups. For convenience, we recall all of the basic facts that we will use here. To establish notation, $G$ will always denote a discrete group.
	
	\begin{definition}
		Let $X$ be a compact Hausdorff space, and assume that $G$ acts by homeomorphisms on $X$. The action is \textit{minimal} if $X$ has no nontrivial closed $G$-invariant subsets. The action is \textit{strongly proximal} if for any Borel Radon probability measure $\mu \in P(X)$, the weak*-closed $G$-orbit of $\mu$ contains a Dirac mass $\delta_x$. A \textit{boundary} is a minimal and strongly proximal compact Hausdorff space.
	\end{definition}

	The appropriate notion of morphism between boundaries is a $G$-equivariant, continuous map.
	
	\begin{proposition}
		Morphisms between boundaries are unique, assuming they exist.
	\end{proposition}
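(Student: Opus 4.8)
The plan is to combine strong proximality of the target boundary with minimality of the source. Let $X$ and $Y$ be $G$-boundaries and let $\phi, \psi \colon X \to Y$ be two continuous $G$-equivariant maps; the goal is to show $\phi = \psi$. I would first consider the set
\[
C \defeq \setbuilder{x \in X}{\phi(x) = \psi(x)}.
\]
This is the preimage of the diagonal $\Delta_Y \subseteq Y \times Y$ under the continuous map $x \mapsto (\phi(x), \psi(x))$, hence closed since $Y$ is Hausdorff; and it is $G$-invariant, since $\phi(x) = \psi(x)$ implies $\phi(gx) = g\phi(x) = g\psi(x) = \psi(gx)$. Since $X$ is minimal, it therefore suffices to produce a single point of $C$.

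To do this, I would fix an arbitrary $x_0 \in X$ and form $\mu \defeq \frac{1}{2}\left(\delta_{\phi(x_0)} + \delta_{\psi(x_0)}\right) \in P(Y)$. By strong proximality of $Y$, the weak*-closed $G$-orbit of $\mu$ contains a Dirac mass, so there is a net $(g_i)$ in $G$ with $g_i \mu = \frac{1}{2}\left(\delta_{g_i \phi(x_0)} + \delta_{g_i \psi(x_0)}\right) \to \delta_y$ weak* for some $y \in Y$. A short argument with a Urysohn function that equals $1$ at $y$ and vanishes off a small neighbourhood of $y$ shows that this forces both $g_i \phi(x_0) \to y$ and $g_i \psi(x_0) \to y$ in $Y$. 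Passing to a subnet and using compactness of $X$, I may assume $g_i x_0 \to x$ for some $x \in X$. Then continuity and equivariance of $\phi$ give $\phi(x) = \lim_i \phi(g_i x_0) = \lim_i g_i \phi(x_0) = y$, and likewise $\psi(x) = y$, so $x \in C$. Hence $C \neq \emptyset$, and by the previous paragraph $C = X$, i.e. $\phi = \psi$.

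I do not expect a genuine obstacle here: the only steps requiring care are the weak*-topology computation showing that a convex combination of point masses converging to a point mass forces each atom to converge to that same point, and the bookkeeping of nets versus subnets when invoking compactness of $X$. Note the argument only uses that $Y$ is strongly proximal and $X$ is minimal; one could alternatively phrase it through the $G$-map $x \mapsto \frac{1}{2}(\delta_{\phi(x)} + \delta_{\psi(x)})$ from $X$ into $P(Y)$, but the direct version above seems cleanest.
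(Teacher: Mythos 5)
Your proof is correct. The coincidence set $C$ is closed and $G$-invariant, strong proximality of $Y$ applied to $\tfrac12(\delta_{\phi(x_0)}+\delta_{\psi(x_0)})$ does force both atoms to converge to the same Dirac limit (the Urysohn argument is fine since compact Hausdorff spaces are normal), and compactness of $X$ plus continuity and equivariance then produce a point of $C$, so minimality of $X$ finishes it. The paper itself does not argue this at all: it simply cites Proposition~4.2 of Furstenberg, so your write-up is a self-contained version of essentially the standard mechanism behind that citation. What your version makes explicit, and which the one-line citation hides, is that only strong proximality of the \emph{target} and minimality of the \emph{source} are used -- neither minimality of $Y$ nor strong proximality of $X$ enters -- so the uniqueness statement holds slightly more generally than for morphisms between boundaries. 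The only cosmetic caveat is the degenerate case $X=\emptyset$ (excluded anyway, since boundaries here are nonempty), where fixing $x_0$ is vacuous but the conclusion is trivial.
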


	\begin{proof}
		This follows almost immediately from \cite[Proposition~4.2]{furstenberg_boundary_theory}.
	\end{proof}

	\begin{proposition}
		There is a universal boundary $\boundary_F G$, in the sense that every other boundary is the image of $\boundary_F G$ under some morphism. This universal boundary is also unique up to isomorphism.
	\end{proposition}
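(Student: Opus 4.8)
The plan is to handle uniqueness first (which is immediate) and then spend the real effort on existence. For uniqueness: if $B_1$ and $B_2$ are both universal boundaries, the universal property gives morphisms $\phi : B_1 \to B_2$ and $\psi : B_2 \to B_1$, and then $\psi \circ \phi$ and $\phi \circ \psi$ are morphisms $B_1 \to B_1$ and $B_2 \to B_2$; since morphisms between boundaries are unique (the preceding proposition) and the identity maps are such morphisms, $\phi$ and $\psi$ are mutually inverse, so $B_1 \cong B_2$. Before constructing the universal boundary, I would record the (purely set-theoretic) observation that the isomorphism classes of $G$-boundaries form a set: if $X$ is a boundary and $x_0 \in X$, minimality forces $Gx_0$ to be dense, so the orbit map $G \to X$ extends to a continuous $G$-equivariant surjection $\beta G \twoheadrightarrow X$; hence every boundary is a $G$-quotient of the single space $\beta G$, and there are only set-many such quotients (they correspond to $G$-invariant unital C*-subalgebras of $\ell^\infty(G)$).

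Fix a set $\{X_i\}_{i \in I}$ of representatives of the isomorphism classes of $G$-boundaries, and set $Z = \prod_{i \in I} X_i$ with the diagonal action, a nonempty compact Hausdorff $G$-space by Tychonoff. Let $M \subseteq Z$ be a minimal closed $G$-invariant subset (exists by Zorn's lemma). Then $M$ is the candidate for $\boundary_F G$: for each $i$, the image $\pi_i(M) \subseteq X_i$ under the coordinate projection is nonempty, closed and $G$-invariant, so by minimality of $X_i$ it equals $X_i$; thus $\pi_i|_M : M \to X_i$ is a morphism of topological $G$-spaces onto each boundary. So, granting that $M$ is itself a boundary, it is a universal one. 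Minimality of $M$ holds by construction, so the only thing left — and the main obstacle — is to show $M$ is strongly proximal, which is delicate because strong proximality is not preserved by products.

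To get strong proximality I would use the lemma that for a \emph{single} boundary $X$, the set of Dirac masses $\{\delta_x : x \in X\}$ is the unique minimal closed $G$-invariant subset of $P(X)$: if $K \subseteq P(X)$ is minimal and $\rho \in K$, then $\overline{G\rho} = K$, strong proximality of $X$ puts some $\delta_x \in K$, and $\overline{G \delta_x} = \{\delta_y : y \in X\}$ (using minimality of $X$) is a closed invariant subset of $K$, hence all of $K$. Now given $\mu \in P(M)$, pick a minimal closed $G$-invariant $K_0 \subseteq \overline{G\mu}$. For each $i$, $(\pi_i)_*(K_0)$ is a minimal subset of $P(X_i)$, hence by the lemma equals $\{\delta_x : x \in X_i\}$; so every $\rho \in K_0$ satisfies $\pi_i(\supp \rho) = \{x^{(i)}\}$ for some point $x^{(i)} \in X_i$, for all $i$, which forces $\supp \rho = \{(x^{(i)})_{i \in I}\}$ to be a single point and $\rho$ to be a Dirac mass. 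Hence $\overline{G\mu}$ contains a Dirac mass, so $M$ is strongly proximal, and therefore $M = \boundary_F G$ as claimed. I expect the propagation-of-strong-proximality step above to be the technical heart; alternatively, one may simply invoke the classical construction of Furstenberg and Glasner, or realize $C(\boundary_F G)$ as the $G$-injective envelope of $\mathbb{C}$ in the sense of Hamana and read off universality from the rigidity of injective envelopes.
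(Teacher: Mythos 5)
Your proof is correct, and it is essentially the classical construction. For comparison: the paper does not prove this proposition in place --- it simply cites Furstenberg's Proposition 4.6 --- but it does carry out the same construction in full detail for the relative analogue $B(G,H)$ in \Cref{universal_G_minimal_H_strongly_proximal_space_existence_uniqueness}, and there the skeleton coincides with yours: representatives of isomorphism classes exist as a set because every such space is a quotient of $\beta G$, one forms the product, extracts a minimal closed invariant subset, notes that each coordinate projection is onto by minimality of the target, and gets uniqueness from uniqueness of morphisms between boundaries. The one genuine divergence is the strong proximality step, which you correctly identify as the technical heart. The paper proves strong proximality of the \emph{whole product} first, by pushing $\mu$ to a Dirac mass on finitely many coordinates at a time and then taking a cluster point of the resulting net indexed by finite sets of coordinates; only afterwards does it pass to a minimal subset. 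You instead pass immediately to a minimal closed $G$-invariant subset $K_0 \subseteq \overline{G\mu}$ and invoke the lemma that for a boundary $X$ the Dirac masses form the unique minimal closed $G$-invariant subset of $P(X)$, pushing $K_0$ forward coordinatewise; since a measure whose pushforward under every projection is a point mass must itself be a point mass, every element of $K_0$ is Dirac. Your route is completely written out and sidesteps the paper's ``it is easy to see'' interpolation from one coordinate to finitely many, at the cost of first establishing the structure of minimal subsets of $P(X)$; the paper's route is more elementary but sketchier. Your fallback via Hamana's $G$-injective envelope of $\C$ is also legitimate and is precisely the Kalantar--Kennedy realization of $C(\partial_F G)$ cited in the preliminaries, though it trades the elementary dynamics for operator-algebraic machinery.
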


	The universal boundary $\boundary_F G$ given above is nowadays called the \textit{Furstenberg boundary}, and a proof of its existence can be found in \cite[Proposition~4.6]{furstenberg_boundary_theory}.
	
	Recall that an \textit{extremally disconnected} space is one where the closure of any open set is open. The following is a well-known theorem of Frol\'{i}k, and can be found in \cite[Theorem~3.1]{frolik_extremally_disconnected_fixed_points}.
	
	\begin{theorem}
	\label{frolik_theorem}
		The fixed point set of any homeomorphism of an extremally disconnected space is clopen.
	\end{theorem}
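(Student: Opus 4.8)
The set $F := \Fix(f)$ is automatically closed (as $X$ is Hausdorff), so everything reduces to showing that $F$ is \emph{open}, equivalently that $X \setminus F$ is clopen. The plan is to extract, from a single maximal clopen set, exactly three clopen sets whose union is $X \setminus F$.

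The one tool I will use repeatedly is a shrinking observation: if $U \subseteq X$ is open with $U \cap f(U) = \emptyset$, then $\overline{U}$ is clopen (by extremal disconnectedness) and still satisfies $\overline{U} \cap f(\overline{U}) = \emptyset$. Indeed $f(U)$ and $f^{-1}(U)$ are open, so their complements are closed and contain $U$, hence contain $\overline U$; this gives $\overline U \cap f(U) = \emptyset$ and (applying $f$) $f(\overline U) \cap U = \emptyset$, after which $\overline U \cap f(\overline U) = \overline U \cap \overline{f(U)} \subseteq \overline{\overline U \cap f(U)} = \emptyset$ since $\overline U$ is open. Consequently, whenever $f(x) \neq x$, separating $x$ from $f(x)$ by disjoint open sets $A \ni x$, $B \ni f(x)$ and applying the observation to $U = A \cap f^{-1}(B)$ produces a \emph{clopen} neighbourhood $V$ of $x$ with $V \cap f(V) = \emptyset$.

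Next I would let $\mathcal{C}$ be the collection of clopen sets $C$ with $C \cap f(C) = \emptyset$, ordered by inclusion. The union $W$ of a chain in $\mathcal C$ satisfies $W \cap f(W) = \emptyset$ by a short two-case argument, so $\overline W \in \mathcal C$ bounds the chain, and Zorn's lemma gives a maximal element $C \in \mathcal C$. I then claim $X = C \cup f(C) \cup f^{-1}(C) \cup F$: if some $x$ lay outside $C \cup f(C) \cup f^{-1}(C)$ with $f(x) \neq x$, choose a clopen $V \ni x$ with $V \cap f(V) = \emptyset$ as above and put $V' := V \setminus \bigl( f(C) \cup f^{-1}(C) \bigr)$, still a clopen neighbourhood of $x$ (here is where $x \notin f(C) \cup f^{-1}(C)$ enters). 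Expanding $(C \cup V') \cap f(C \cup V')$ into its four terms, each one vanishes: $C \cap f(C) = \emptyset$ since $C \in \mathcal C$; $V' \cap f(C) = \emptyset$ and, applying $f$, $f(V') \cap C = \emptyset$ straight from the definition of $V'$; and $V' \cap f(V') \subseteq V \cap f(V) = \emptyset$. Hence $C \cup V' \in \mathcal C$ strictly enlarges $C$, a contradiction. Since each of $C$, $f(C)$, $f^{-1}(C)$ is clopen and disjoint from its own $f$-image (immediate for the last two by applying $f^{\pm 1}$ to $C \cap f(C) = \emptyset$), none of them meets $F$; combined with the claim this forces $X \setminus F = C \cup f(C) \cup f^{-1}(C)$, a finite union of clopen sets and hence clopen. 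Therefore $F$ is clopen.

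The crux is the maximality argument, and specifically the fact that one needs \emph{three} translates of $C$ (a $3$-cycle on a three-point space shows two never suffice) together with the precise choice $V' = V \setminus (f(C) \cup f^{-1}(C))$, which is exactly what is needed to kill all four cross-terms while keeping $x$ inside $V'$. The shrinking observation and the Zorn-type bookkeeping are routine by comparison.
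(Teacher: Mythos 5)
Your proof is correct, but there is nothing in the paper to compare it against step by step: the paper does not prove this statement at all, it simply quotes it as Frol\'{i}k's theorem and cites \cite[Theorem~3.1]{frolik_extremally_disconnected_fixed_points}. What you have written is a complete, self-contained version of the standard argument behind that citation: the shrinking observation (an open set moved off itself by $f$ has clopen closure still moved off itself, with extremal disconnectedness entering exactly through the openness of $\overline{U}$ in $\overline{U}\cap\overline{f(U)}\subseteq\overline{\overline{U}\cap f(U)}$), Zorn's lemma producing a maximal clopen $C$ with $C\cap f(C)=\emptyset$, and the decomposition $X\setminus\Fix(f)=C\cup f(C)\cup f^{-1}(C)$, which is the topological analogue of the classical ``three sets'' lemma; your choice $V'=V\setminus\bigl(f(C)\cup f^{-1}(C)\bigr)$ does kill all four cross terms while keeping $x$, and the disjointness of each translate from $\Fix(f)$ is immediate as you say. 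Two small remarks. First, you use Hausdorffness twice (closedness of $\Fix(f)$ and separating $x$ from $f(x)$); the statement as printed omits it, but it is implicit in the paper, whose standing convention is that the relevant spaces (in particular $\boundary_F G$) are compact Hausdorff. Second, your argument needs no compactness, so it actually establishes the result in the generality the paper states (arbitrary Hausdorff extremally disconnected spaces), which is a modest bonus over simply invoking the literature.
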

	
	\begin{corollary}
	\label{frolik_theorem_furstenberg_boundary}
		The fixed point set of any homeomorphism of $\boundary_F G$ is clopen.
	\end{corollary}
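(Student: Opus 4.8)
The plan is to reduce everything to Frol\'{i}k's theorem (\Cref{frolik_theorem}): once we know that $\boundary_F G$ is extremally disconnected, the statement is immediate, since then any homeomorphism $\varphi$ of $\boundary_F G$ automatically has clopen fixed point set $\Fix(\varphi)$. So the only point requiring an argument is that $\boundary_F G$ is extremally disconnected. This is well known, but I would recall how one obtains it.

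The argument I have in mind uses the identification $C(\boundary_F G) \cong I_G(\C)$ with the $G$-injective envelope of $\C$ in the category of operator systems carrying a $G$-action, together with its rigidity: any $G$-equivariant unital completely positive (u.c.p.) self-map of $C(\boundary_F G)$ that fixes $\C$ is the identity. Fixing a point $x_0 \in \boundary_F G$, the orbit map $G \to \boundary_F G$, $t \mapsto t x_0$, is $G$-equivariant, so pulling functions back along it gives a unital $G$-equivariant $*$-homomorphism $\alpha \colon C(\boundary_F G) \to \ell^\infty(G)$. On the other side, $G$-injectivity of $C(\boundary_F G)$, applied to the unital inclusion $\C \hookrightarrow \ell^\infty(G)$ and the canonical inclusion $\C \hookrightarrow C(\boundary_F G)$, yields a $G$-equivariant u.c.p. map $\psi \colon \ell^\infty(G) \to C(\boundary_F G)$ restricting to this inclusion on $\C$. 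Then $\psi \circ \alpha$ is a $G$-equivariant u.c.p. self-map of $C(\boundary_F G)$ fixing $\C$, so by rigidity $\psi \circ \alpha = \id$, exhibiting $C(\boundary_F G)$ as a u.c.p. retract of $\ell^\infty(G)$.

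Since $\ell^\infty(G)$ is an injective operator system and a u.c.p. retract of an injective operator system is again injective, $C(\boundary_F G)$ is an injective commutative C*-algebra; such algebras are exactly the $C(X)$ with $X$ extremally disconnected. Hence $\boundary_F G$ is extremally disconnected, and \Cref{frolik_theorem} gives the conclusion. There is essentially no obstacle here — the only work is importing the standard facts about the $G$-injective envelope (its existence, its rigidity, and the characterization of injective abelian C*-algebras as those with Stonean spectrum) — and in a written proof one would likely just cite that $\boundary_F G$ is extremally disconnected from \cite{kalantar_kennedy_boundaries} and then invoke \Cref{frolik_theorem} in a single line.
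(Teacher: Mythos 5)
Your proposal is correct and follows essentially the same route as the paper: the paper's proof simply cites that $\boundary_F G$ is extremally disconnected (from \cite{kalantar_kennedy_boundaries} or \cite{breuillard_kalantar_kennedy_ozawa_c_simplicity}) and applies \Cref{frolik_theorem}, exactly as you propose in your final sentence. Your expanded sketch of why $\boundary_F G$ is extremally disconnected (the $G$-injective envelope, rigidity, and the retract of $\ell^\infty(G)$ argument) is the standard argument behind the cited references and is accurate, but the paper does not reproduce it.
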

	
	\begin{proof}
		It is known that the Furstenberg boundary of a discrete group is always extremally disconnected - see \cite[Remark~3.16]{kalantar_kennedy_boundaries} or \cite[Proposition~2.4]{breuillard_kalantar_kennedy_ozawa_c_simplicity}.
	\end{proof}

	A well-known relativization fact that will come in extremely useful is the following:
	
	\begin{proposition}
		Assume $N \normal G$ is normal. The action of $N$ on $\boundary_F N$ extends uniquely to an action of $G$.
	\end{proposition}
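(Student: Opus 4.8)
The plan is to use functoriality of the Furstenberg boundary under automorphisms of $N$, together with the conjugation homomorphism $G \to \operatorname{Aut}(N)$, which exists precisely because $N$ is normal. Conceptually, $\operatorname{Aut}(N)$ acts on the category of $N$-boundaries by twisting, this category has $\boundary_F N$ as its (essentially unique) universal object, and such an object is preserved by the autoequivalences coming from $\operatorname{Aut}(N)$; one then restricts the resulting action along $G \to \operatorname{Aut}(N)$. Concretely, I would first show that every $\alpha \in \operatorname{Aut}(N)$ induces a canonical homeomorphism of $\boundary_F N$. Writing $n \mapsto \pi_n$ for the given $N$-action on $\boundary_F N$, let $\pi^\alpha$ be the twisted action $\pi^\alpha_n \defeq \pi_{\alpha(n)}$. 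Since $\alpha$ is surjective, the families of $N$-orbits of points and of measures are the same for $\pi$ and $\pi^\alpha$, so $(\boundary_F N, \pi^\alpha)$ is again a minimal, strongly proximal compact Hausdorff $N$-space, i.e.\ a boundary; and it is again universal (a surjection onto any boundary $(X,\sigma)$ is obtained by twisting one onto $(X, \sigma^{\alpha^{-1}})$). As morphisms between boundaries are unique when they exist, there is a unique $N$-equivariant homeomorphism $u_\alpha \colon (\boundary_F N, \pi) \to (\boundary_F N, \pi^\alpha)$, characterized by $u_\alpha \circ \pi_n = \pi_{\alpha(n)} \circ u_\alpha$ for all $n \in N$.

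Next I would record two properties, each proved by exhibiting the relevant composite as an $N$-equivariant homeomorphism between the appropriate twisted boundaries and then invoking uniqueness of morphisms between boundaries: first, $u_{\alpha \circ \beta} = u_\alpha \circ u_\beta$ for all $\alpha, \beta \in \operatorname{Aut}(N)$; second, $u_{c_n} = \pi_n$ for $n \in N$, where $c_n \in \operatorname{Aut}(N)$ denotes conjugation by $n$. Since $N$ is normal, $g \mapsto c_g$ with $c_g(n) = gng^{-1}$ is a well-defined homomorphism $G \to \operatorname{Aut}(N)$, so the first property shows that $g \cdot x \defeq u_{c_g}(x)$ defines an action of $G$ on $\boundary_F N$ by homeomorphisms, and the second shows that it restricts to the original $N$-action.

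For uniqueness, let $\rho$ be any $G$-action by homeomorphisms of $\boundary_F N$ restricting to $\pi$ on $N$. For $g \in G$ and $n \in N$, normality gives $gng^{-1} \in N$, so $\rho(g) \circ \pi_n \circ \rho(g)^{-1} = \rho(gng^{-1}) = \pi_{gng^{-1}}$; hence $\rho(g)$ is an $N$-equivariant morphism $(\boundary_F N, \pi) \to (\boundary_F N, \pi^{c_g})$, and uniqueness of morphisms between boundaries forces $\rho(g) = u_{c_g}$. The one place that requires a genuine (if routine) verification is that twisting by an automorphism preserves minimality, strong proximality, and universality of $\boundary_F N$; once that is in hand, the homomorphism property, the restriction to $N$, and uniqueness are all automatic consequences of the uniqueness of morphisms between boundaries, which is the real engine of the argument.
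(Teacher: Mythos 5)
Your argument is correct and complete. Note that the paper itself does not prove this proposition; it simply cites \cite[Lemma~20]{ozawa_c_simplicity}, and your twisting argument is essentially the standard proof found there: twist the $N$-action by the conjugation automorphisms $c_g$, observe that the twisted space is still a boundary (same orbits of points and measures, hence same minimality and strong proximality), and then let uniqueness of morphisms between boundaries do all the remaining work — the cocycle identity $u_{\alpha\beta} = u_\alpha \circ u_\beta$, the compatibility $u_{c_n} = \pi_n$, and the uniqueness of the extension all follow by exhibiting two morphisms between the same pair of boundaries and invoking rigidity. One small simplification you could make: you do not actually need to verify that $(\boundary_F N, \pi^\alpha)$ is \emph{universal}. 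Once it is a boundary, universality of $(\boundary_F N, \pi)$ gives the morphism $u_\alpha$, and invertibility of $u_\alpha$ is automatic from your own cocycle identity, since $u_\alpha \circ u_{\alpha^{-1}} = u_{\id} = \id$ (the identity being the unique self-morphism of the boundary). This trims the one step you flagged as requiring genuine verification down to just minimality and strong proximality of the twisted action.
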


	A proof of this fact can be found in \cite[Lemma~20]{ozawa_c_simplicity}. Note that, by uniqueness, there is no ambiguity when referring to \textit{the} action of $G$ on $\boundary_F N$.

	\subsection{C*-simplicity}
	\label{subsection_preliminaries_c_simplicity}
	
	Again, $G$ will always denote a discrete group. The group $G$ is called C*-simple if its reduced group C*-algebra $C^*_\lambda(G)$ is simple. Here, we collect the various characterizations of C*-simplicity that we will make use of throughout this paper.
	
	The Furstenberg boundary $\boundary_F G$ (see the review in \Cref{subsection_preliminaries_boundary_theory}) played a central role in the original characterizations of C*-simplicity. Recall that an action $G \actson X$ is said to be \textit{free} if the fixed point sets $\Fix(t)$ are empty for $t \neq e$. If $X$ is a topological space, a weaker notion is \textit{topologically free}, where the fixed point sets $\Fix(t)$ have empty interior for $t \neq e$.
	The following theorem is collectively proven in \cite[Theorem~3.1]{breuillard_kalantar_kennedy_ozawa_c_simplicity} (Theorem~\ref{c_simplicity_equivalence_dynamical}, $(i) \iff (iii) \iff (iv)$) and \cite[Theorem~6.2]{kalantar_kennedy_boundaries} (Theorem~\ref{c_simplicity_equivalence_dynamical}, $(i) \iff (ii)$, along with other equivalences).
	
	
	\begin{theorem}
	\label{c_simplicity_equivalence_dynamical}
		The following are equivalent:
		\begin{enumerate}
			\item $G$ is C*-simple.
			\item $C(\boundary_F G) \rtimes_\lambda G$ is simple.
			\item The action of $G$ on $\boundary_F G$ is free.
			\item The action of $G$ on some boundary is topologically free.
		\end{enumerate}
	\end{theorem}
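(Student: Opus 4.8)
The plan is to assemble \Cref{c_simplicity_equivalence_dynamical} from the literature rather than reprove it, since $(1)\iff(3)\iff(4)$ is \cite[Theorem~6.2]{kennedy_c_simplicity_intrinsic} and $(1)\iff(2)$ is \cite[Theorem~3.1]{breuillard_kalantar_kennedy_ozawa_c_simplicity}. After recalling those statements the only genuinely new thing to verify is a compatibility point that is nonetheless worth isolating: on the Furstenberg boundary, freeness and topological freeness coincide. This is immediate from \Cref{frolik_theorem_furstenberg_boundary}, since there each fixed-point set $\Fix(t)$ is clopen, hence has empty interior if and only if it is empty. With that in hand, $(3)$ is literally the statement ``$G\actson\boundary_F G$ is topologically free'', and $(3)\Rightarrow(4)$ is trivial because $\boundary_F G$ is itself a boundary.

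For the reader who wants the logical skeleton in one place, I would present it as the cycle $(3)\Rightarrow(2)\Rightarrow(1)\Rightarrow(3)$ together with $(3)\iff(4)$. For $(3)\Rightarrow(2)$: a boundary is minimal, a free action is topologically free, and a minimal topologically free action of a discrete group on a compact Hausdorff space has simple reduced crossed product — topological freeness forces every nonzero ideal $J$ of $C(\boundary_F G)\rtimes_\lambda G$ to satisfy $J\cap C(\boundary_F G)\neq 0$, and since $J\cap C(\boundary_F G)$ is automatically a $G$-invariant ideal of $C(\boundary_F G)$, minimality forces it to contain the unit. For $(2)\Rightarrow(1)$: one uses the canonical embedding $C^*_\lambda(G)\hookrightarrow C(\boundary_F G)\rtimes_\lambda G$ together with $G$-injectivity of $C(\boundary_F G)$ (Hamana; see the discussion in \cite{kalantar_kennedy_boundaries}) to transfer simplicity of the crossed product down to $C^*_\lambda(G)$. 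For $(1)\Rightarrow(3)$: $G$-injectivity produces a $G$-equivariant unital completely positive map $\Phi\colon C(\boundary_F G)\rtimes_\lambda G\to C(\boundary_F G)$ restricting to the identity on $C(\boundary_F G)$; a multiplicative-domain computation gives $\phi(\lambda_t)\,f = (t\cdot f)\,\phi(\lambda_t)$ for all $f\in C(\boundary_F G)$, where $\phi\defeq\Phi|_{C^*_\lambda(G)}$, so that $\phi(\lambda_t)$ vanishes off $\Fix(t)$; C*-simplicity is then invoked (via Powers-type averaging) to force $\phi(\lambda_t)=0$ for $t\neq e$, and combining this with the clopenness of $\Fix(t)$ (\Cref{frolik_theorem_furstenberg_boundary}) one concludes $\Fix(t)=\emptyset$. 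Finally $(4)\Rightarrow(3)$: given a topologically free action $G\actson X$ on a boundary, pull back along the unique equivariant surjection $p\colon\boundary_F G\to X$; since $p$ carries $\Fix(t)$ into $\Fix_X(t)$ and $\Fix(t)$ is clopen, one excludes $\Fix(t)\neq\emptyset$ using minimality of $\boundary_F G$ and topological freeness of $X$ (if $\Fix(t)=\boundary_F G$ then $t\neq e$ acts trivially on $C(X)\subseteq C(\boundary_F G)$, contradicting topological freeness; the proper nonempty clopen case is ruled out using strong proximality).

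The step I expect to be the real obstacle, were one to insist on a self-contained proof, is $(1)\Rightarrow(3)$ — equivalently, extracting freeness of the boundary action from C*-simplicity. The multiplicative-domain computation is routine, but bridging the gap from ``$\phi(\lambda_t)$ supported on $\Fix(t)$'' to ``$\Fix(t)=\emptyset$'' is exactly where the substantive arguments of \cite{kalantar_kennedy_boundaries} and \cite{breuillard_kalantar_kennedy_ozawa_c_simplicity} are needed, and I would in practice cite those papers (together with \cite{kennedy_c_simplicity_intrinsic} for the equivalence with topological freeness on an arbitrary boundary) rather than reproduce them. A secondary delicate point is the exclusion of a proper nonempty clopen fixed-point set in $(4)\Rightarrow(3)$, which again rests on \Cref{frolik_theorem_furstenberg_boundary} together with strong proximality of $\boundary_F G$.
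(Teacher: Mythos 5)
Your proposal matches the paper's treatment: the theorem is recorded in the preliminaries purely by citation, with $(1) \iff (3) \iff (4)$ attributed to \cite[Theorem~6.2]{kennedy_c_simplicity_intrinsic} and $(1) \iff (2)$ to \cite[Theorem~3.1]{breuillard_kalantar_kennedy_ozawa_c_simplicity}, which are exactly the references you invoke. Your additional sketch (including the observation that freeness and topological freeness coincide on $\boundary_F G$ because \Cref{frolik_theorem_furstenberg_boundary} makes each $\Fix(t)$ clopen) goes beyond what the paper writes down but is consistent with how the paper uses these facts elsewhere.
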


	It is now known that C*-simplicity is equivalent to an averaging property originally considered by Powers. The definition we present here is easily seen to be equivalent to the definition presented in \cite[Definition~6.2]{kennedy_c_simplicity_intrinsic}.

	\begin{definition}
		A discrete group $G$ is said to satisfy \textit{Powers' averaging property} if for any $\varepsilon > 0$ and any finite $F \subseteq G \setminus \set{e}$, there are $s_1, \dots, s_m \in G$ such that
		$$ \norm{\frac{1}{m} \sum_{j=1}^m \lambda_{s_j} \lambda_t \lambda_{s_j}^*} < \varepsilon \quad \forall t \in F. $$
	\end{definition}

	The equivalence between C*-simplicity and Powers' averaging property was independently proven in \cite[Theorem~6.3]{kennedy_c_simplicity_intrinsic} and \cite[Theorem~4.5]{haagerup_new_look_c_simplicity}.

	\begin{theorem}
	\label{c_simplicity_equivalence_powers_averaging}
		The following are equivalent:
		\begin{enumerate}
			\item $G$ is C*-simple.
			\item $G$ has Powers' averaging property.
		\end{enumerate}
	\end{theorem}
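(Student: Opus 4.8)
The plan is to run the cycle $(1)\Rightarrow(3)\Rightarrow(2)\Rightarrow(1)$ together with $(3)\Rightarrow(4)\Rightarrow(1)$, using two soft permanence properties of relative simplicity as the backbone. First I would record these. If $B\subseteq A_1\subseteq A_2$ and $B$ is relatively simple in $A_2$, then $B$ is relatively simple in $A_1$: given a u.c.p.\ map $A_1\to B(\mathcal H)$ that is a $*$-homomorphism on $B$, extend it to $A_2$ by Arveson's theorem; the extension is still multiplicative on $B$, hence faithful on $A_2$, hence on $A_1$. And if $B_1\subseteq B_2\subseteq A$ with $B_1$ relatively simple in $A$, then so is $B_2$, since a $*$-homomorphism on $B_2$ restricts to one on $B_1$. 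Applying the first with $C^*_\lambda(N)\subseteq C^*_\lambda(G)\subseteq C(\boundary_F N)\rtimes_\lambda G$ gives $(3)\Rightarrow(2)$, and the second with $C^*_\lambda(N)\subseteq C(\boundary_F N)\rtimes_\lambda N\subseteq C(\boundary_F N)\rtimes_\lambda G$ gives $(3)\Rightarrow(4)$.

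For the implications back into $(1)$ I would argue contrapositively through \Cref{normal_plump_subgroup_amrutam_equivalence}, which identifies $(1)$ with ``$N$ is C*-simple and $C_G(N)=\set e$''. If $N$ is not C*-simple, then $C^*_\lambda(N)$ (resp.\ $C(\boundary_F N)\rtimes_\lambda N$, simple iff $N$ is C*-simple by \Cref{c_simplicity_equivalence_dynamical}) admits a non-faithful representation; Arveson-extending it to the ambient algebra yields a u.c.p.\ map that is a $*$-homomorphism on the subalgebra yet already fails to be faithful there, so neither $(2)$ nor $(4)$ holds. If instead $N$ is C*-simple but $C_G(N)\neq\set e$, pick $e\neq t_0\in C_G(N)$; since C*-simple groups have trivial amenable radical, $Z(N)=\set e$, which forces $\langle t_0\rangle\cap N=\set e$ and hence $\langle N,t_0\rangle=N\times\langle t_0\rangle$. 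As $\langle t_0\rangle$ is amenable, its trivial representation is weakly contained in its regular representation, so $\lambda_s\mapsto\lambda_s\ (s\in N)$, $\lambda_{t_0}\mapsto 1$ defines a $*$-representation of $C^*_\lambda(N\times\langle t_0\rangle)$. Moreover $C_G(N)$ acts trivially on $\boundary_F N$, since an $N$-equivariant homeomorphism of the $N$-boundary must be the identity by uniqueness of boundary morphisms, so the same assignment extends to $C(\boundary_F N)\rtimes_\lambda(N\times\langle t_0\rangle)$. Arveson-extending and testing on $\lambda_{t_0}-1\neq0$ gives $\phi((\lambda_{t_0}-1)^*(\lambda_{t_0}-1))=0$, so again $(2)$ and $(4)$ fail. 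This yields $(2)\Rightarrow(1)$ and $(4)\Rightarrow(1)$.

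It remains to prove $(1)\Rightarrow(3)$, which is the crux. Let $\phi\colon C(\boundary_F N)\rtimes_\lambda G\to B(\mathcal H)$ be u.c.p.\ and multiplicative on $C^*_\lambda(N)$; by Choi's theorem $C^*_\lambda(N)$ lies in its multiplicative domain, so $\phi\big(\tfrac1m\sum_j\lambda_{s_j}a\lambda_{s_j}^*\big)=\tfrac1m\sum_j\phi(\lambda_{s_j})\phi(a)\phi(\lambda_{s_j})^*$ for $s_j\in N$. The target is a relative Dixmier/Powers averaging statement: for every $a$ and $\varepsilon>0$ there exist $s_1,\dots,s_m\in N$ with $\big\|\tfrac1m\sum_j\lambda_{s_j}a\lambda_{s_j}^*-m_0(a)\mathbf 1\big\|<\varepsilon$, where $m_0=\nu\circ E$, $E\colon C(\boundary_F N)\rtimes_\lambda G\to C(\boundary_F N)$ is the canonical faithful conditional expectation and $\nu$ is the stationary measure on $\boundary_F N$. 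Granting this, if $\phi(a)=0$ then applying $\phi$ to the average forces $m_0(a)=0$; since $\nu$ has full support by minimality it is faithful on $C(\boundary_F N)$, so $m_0$ is a faithful state and $a=0$, giving faithfulness of $\phi$. (The scalar case of this averaging, inside $C^*_\lambda(G)$ with the faithful trace $\tau_\lambda$ in place of $m_0$, already gives $(1)\Rightarrow(2)$ directly.)

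The main obstacle is establishing this averaging. Using \Cref{normal_plump_subgroup_amrutam_equivalence} I may assume $G\actson\boundary_F N$ is free and $N$-strongly proximal. Writing $a\approx\sum_{t\in F}a_t\lambda_t$ with $a_t\in C(\boundary_F N)$, conjugation by $\lambda_{s_j}$ splits the average into a diagonal part $\tfrac1m\sum_j s_j\cdot E(a)$, a pure $C(\boundary_F N)$-average untouched by the group shift since $e$ is fixed under conjugation, and an off-diagonal part $\tfrac1m\sum_j\lambda_{s_j}(a-E(a))\lambda_{s_j}^*$. Strong proximality drives the diagonal to $m_0(a)\mathbf 1$, while plumpness kills the off-diagonal via an Akemann--Ostrand/Haagerup-type estimate, namely the operator-coefficient refinement of \Cref{definition_plump_subgroup} bounding $\big\|\tfrac1m\sum_j(s_j\cdot a_t)\lambda_{s_jts_j^{-1}}\big\|$ for $t\neq e$. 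The delicate point, exactly as in the proofs of Powers' property for C*-simple groups, is selecting a single averaging sequence, the convolution powers of a generating measure on $N$, that simultaneously contracts the diagonal to the stationary value and disperses the off-diagonal group elements; freeness of the $G$-action is precisely what prevents the latter from resonating. With $(1)\Rightarrow(3)$ in hand the cycle closes: $(1)\Rightarrow(3)\Rightarrow(2)\Rightarrow(1)$ and $(3)\Rightarrow(4)\Rightarrow(1)$.
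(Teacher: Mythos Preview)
Your proposal addresses the wrong theorem. The statement labeled \texttt{c\_simplicity\_equivalence\_powers\_averaging} has exactly two items---$G$ is C*-simple, and $G$ has Powers' averaging property---yet you run a cycle through items $(3)$ and $(4)$, and your entire argument is couched in terms of a normal subgroup $N$, relative simplicity, and the crossed product $C(\boundary_F N)\rtimes_\lambda G$. What you have written is an attempt at \Cref{normal_plump_subgroup_relative_simplicity_equivalence}, not the theorem you were asked about. Note also that the paper does not prove \Cref{c_simplicity_equivalence_powers_averaging} at all: it is quoted in the preliminaries as a known result of Kennedy and Haagerup, with citations, and no argument is supplied.

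If your text is read instead as a proof of \Cref{normal_plump_subgroup_relative_simplicity_equivalence}, the permanence observations and the contrapositive arguments for $(2)\Rightarrow(1)$ and $(4)\Rightarrow(1)$ match the paper's reasoning closely. However, your $(1)\Rightarrow(3)$ diverges sharply from the paper and contains a genuine gap. You aim to show that $N$-conjugation averages every $a\in C(\boundary_F N)\rtimes_\lambda G$ arbitrarily close to a scalar $m_0(a)=\nu\circ E(a)$, but no canonical $\nu$ exists: ``the stationary measure'' is only defined relative to a choice of generating measure on $N$, and different choices give different $\nu$. More seriously, the claim that the diagonal part $\tfrac1m\sum_j s_j\cdot E(a)$ can be driven to a constant function is not a consequence of strong proximality (which pushes measures toward Dirac masses, not functions toward scalars), and you yourself flag the synchronization of diagonal and off-diagonal averaging as ``delicate'' without resolving it. The paper sidesteps all of this: it equips $B(\mathcal H)$ with the $N$-action coming from $\phi(\lambda_s)$, uses injectivity of $C(\boundary_F N)$ to produce an $N$-equivariant u.c.p.\ map $\psi\colon B(\mathcal H)\to C(\boundary_F N)$, and then invokes \Cref{lemma_plump_pseudoexpectation_equivalence} and rigidity to identify $\psi\circ\phi$ with the canonical (faithful) expectation. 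That argument never needs a crossed-product averaging statement.
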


	Finally, there is an intrinsic characterization of C*-simplicity by Kennedy. Recall that we may view the space of subgroups of $G$, i.e. $\Sub(G)$, as a closed (hence compact) subset of $2^G$. The corresponding topology on $\Sub(G)$ is known as the \textit{Chabauty topology}, and the space of amenable subgroups $\Sub_a(G)$ is again a closed (hence compact) subset of this space. More information can be found in \cite[Section~4]{kennedy_c_simplicity_intrinsic}. The following definitions can be found at the start of \cite[Section~4]{kennedy_c_simplicity_intrinsic} and in \cite[Definition~5.1]{kennedy_c_simplicity_intrinsic}.
	
	\begin{definition}
		A \textit{uniformly recurrent subgroup} of $G$ is a nonempty closed minimal subset of $\Sub(G)$. It is called \textit{amenable} if one (equivalently all) of its elements are amenable, and \textit{nontrivial} if it is not $\set{\set{e}}$. A subgroup $K \leq G$ is called \textit{residually normal} if the closed orbit in $\Sub(G)$ does not contain the trivial subgroup $\set{e}$. Algebraically, $K \leq G$ is residually normal if and only if there exists a finite $F \subseteq G \setminus \set{e}$ such that $F \cap sKs^{-1} \neq \emptyset$ for any $s \in G$.
	\end{definition}

	The following equivalence is proven in \cite[Theorem~4.1]{kennedy_c_simplicity_intrinsic} and \cite[Theorem~5.3]{kennedy_c_simplicity_intrinsic}:

	\begin{theorem}
	\label{c_simplicity_equivalence_intrinsic}
		The following are equivalent:
		\begin{enumerate}
			\item $G$ is C*-simple.
			\item $G$ has no nontrivial amenable uniformly recurrent subgroup.
			\item $G$ has no amenable residually normal subgroup.
		\end{enumerate}
	\end{theorem}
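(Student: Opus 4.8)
The plan is to prove \Cref{normal_plump_subgroup_residually_normal_equivalence} by establishing the cycle $(1) \Rightarrow (3) \Rightarrow (2) \Rightarrow (1)$, relativizing Kennedy's arguments from \Cref{c_simplicity_equivalence_intrinsic} to the setting where $N \normal G$ acts on $\Sub(G)$ by conjugation. The central object is the stabilizer URS. Recall that for a $G$-minimal, $N$-strongly proximal, $G$-topologically free space $X$ (which exists by \Cref{normal_plump_subgroup_amrutam_equivalence}, as relative C*-simplicity of $N$ is equivalent to plumpness, hence to condition (3) there), the map $x \mapsto G_x = \Fix(x)$ sending a point to its stabilizer is upper semicontinuous, and the closure of its image is a $G$-invariant subset of $\Sub(G)$. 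By restricting attention to the $N$-action we obtain an $N$-URS, and topological freeness plus strong proximality will control its amenability.

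First I would prove $(1) \Rightarrow (3)$ by contraposition. Suppose there is a nontrivial amenable $N$-URS, say $\mathcal{R} \subseteq \Sub_a(G)$. The strategy is to use $\mathcal{R}$ to build an amenable $N$-residually normal subgroup, thereby obstructing plumpness via \Cref{normal_plump_subgroup_amrutam_equivalence}. Since $\mathcal{R}$ is $N$-minimal and nontrivial, no element of $\mathcal{R}$ is the trivial subgroup, and by $N$-minimality the closed $N$-orbit of any $K \in \mathcal{R}$ equals $\mathcal{R}$, which does not contain $\set{e}$; hence each such $K$ is $N$-residually normal in the sense of \Cref{definition_relative_residual_normal_and_urs}, and amenable since $\mathcal{R}$ is. This direction is essentially bookkeeping once the definitions are unwound.

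The implication $(3) \Rightarrow (2)$ is the reverse bookkeeping: given an amenable $N$-residually normal subgroup $K$, the closed $N$-orbit $\closure{N \cdot K}$ in $\Sub(G)$ avoids $\set{e}$ by definition, and any minimal closed $N$-invariant subset of it (which exists by a Zorn's lemma / compactness argument applied to the compact $N$-space $\Sub_a(G)$) is a nontrivial amenable $N$-URS, giving (2). The genuinely hard implication is $(2) \Rightarrow (1)$: from the \emph{absence} of nontrivial amenable $N$-URS's I must deduce relative C*-simplicity. Here the plan is to run the contrapositive through \Cref{normal_plump_subgroup_amrutam_equivalence} and \Cref{normal_plump_subgroup_relative_simplicity_equivalence}: if $N$ is not relatively C*-simple, then by \Cref{normal_plump_subgroup_amrutam_equivalence} the action $G \actson \boundary_F N$ is not free, so some $t \neq e$ has $\Fix(t) \neq \emptyset$ in $\boundary_F N$. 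Using the extension of the $N$-action to a $G$-action together with Frol\'{i}k's theorem (\Cref{frolik_theorem_furstenberg_boundary}), the fixed set is clopen and $N$-invariantly structured; pushing the stabilizer map forward along an $N$-boundary map into $\Sub(G)$ produces a URS whose generic stabilizers are amenable (being point stabilizers of a boundary action, which are amenable by the strong proximality) and nontrivial (detecting the nonfree element $t$). The main obstacle will be verifying amenability and nontriviality of the resulting stabilizer $N$-URS simultaneously: I must show the stabilizers are large enough to be nontrivial yet still amenable, which requires carefully combining the $N$-strong proximality of $\boundary_F N$ (forcing point stabilizers to be amenable via the Furstenberg-type argument that amenable actions have amenable stabilizers) with the precise way the nonfree element $t \in G$ interacts with the normal subgroup $N$ and its centralizer $C_G(N)$.
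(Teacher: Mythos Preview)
You are proving the wrong theorem. The statement in question is \Cref{c_simplicity_equivalence_intrinsic}, Kennedy's intrinsic characterization of C*-simplicity, which the paper does \emph{not} prove: it is recorded in the preliminaries with a citation to \cite{kennedy_c_simplicity_intrinsic}. Your entire proposal is instead a sketch of \Cref{normal_plump_subgroup_residually_normal_equivalence}, the relative version. Even if one hoped to recover \Cref{c_simplicity_equivalence_intrinsic} by specializing your argument to $N = G$, your outline treats \Cref{normal_plump_subgroup_amrutam_equivalence} and \Cref{normal_plump_subgroup_relative_simplicity_equivalence} as black boxes, and the paper's proof of the former in turn relies on results (e.g.\ \cite[Theorem~1.4]{breuillard_kalantar_kennedy_ozawa_c_simplicity}) at the same depth as Kennedy's theorem, so nothing self-contained is gained.

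Setting that aside and comparing your sketch to the paper's actual proof of \Cref{normal_plump_subgroup_residually_normal_equivalence}, there are two concrete gaps. First, your $(1) \Rightarrow (3)$ is mislabeled: from a nontrivial amenable $N$-URS you extract an amenable $N$-residually normal subgroup, but that is the contrapositive of $(2) \Rightarrow (3)$, not of $(1) \Rightarrow (3)$; appealing to \Cref{normal_plump_subgroup_amrutam_equivalence} does not close the gap since that theorem says nothing about residually normal subgroups. Second, and more seriously, in your $(2) \Rightarrow (1)$ you want the point-stabilizers $G_x$ for $x \in \boundary_F N$ to form an amenable $N$-URS, justifying amenability by ``point stabilizers of a boundary action are amenable.'' That argument only gives amenability of the $N$-stabilizers $N_x$, since $\boundary_F N$ is an $N$-boundary; the $G$-stabilizers $G_x$ always contain $C_G(N)$ (which acts trivially on $\boundary_F N$ by \cite[Lemma~5.3]{breuillard_kalantar_kennedy_ozawa_c_simplicity}), and $C_G(N)$ can be non-amenable. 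The paper avoids this entirely: for not-$(1) \Rightarrow$ not-$(2)$ it uses the dichotomy from \Cref{normal_plump_subgroup_amrutam_equivalence} --- either $N$ is not C*-simple, whence \Cref{c_simplicity_equivalence_intrinsic} applied to $N$ produces an amenable $N$-residually normal subgroup inside $N$, or $C_G(N) \neq \set{e}$, whence any nontrivial cyclic subgroup of $C_G(N)$ is amenable and $N$-fixed, hence $N$-residually normal. The paper's genuinely hard direction is the opposite one, $(1) \Rightarrow (2)$, handled by the invariant-measure-plus-strong-proximality argument you never address.
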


	\begin{remark}
	\label{remark_c_simple_countability}
		It is worth noting that countability of $G$ is not a requirement for any of the characterizations of C*-simplicity listed here. We will use some of these equivalences to prove our results, which in turn will be used for some examples, some of which involve uncountable groups.
	\end{remark}

	\section{Proof of main results}
	\label{section_proof_of_most_of_main_result}
	
	We first prove \Cref{normal_plump_subgroup_amrutam_equivalence}, most of which is already proven in \cite{amrutam_plump_subgroups}. First, we dualize the definition of plumpness to pass from the C*-algebra $C^*_\lambda(G)$ to its state space:
	
	\begin{lemma}
	\label{lemma_plump_dualization}
		Assume $H \leq G$. Then $H$ is plump in $G$ if and only if for any $\phi \in S(C^*_\lambda(G))$, the closed convex hull $\closure{\conv} H\phi$ contains the canonical trace $\tau_\lambda$.
	\end{lemma}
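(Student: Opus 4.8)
The plan is to prove the two implications separately, in each case passing between $C^*_\lambda(G)$ and its state space by Hahn--Banach duality, and in each case exploiting that for a finitely supported probability measure $\mu$ on $H$ the averaging map $T_\mu(a) \defeq \int_H \lambda_s a \lambda_s^*\, d\mu(s)$ is a convex combination of the $*$-automorphisms $\operatorname{Ad}(\lambda_s)$, hence unital, positive, norm-contractive, and trace-preserving ($\tau_\lambda \circ T_\mu = \tau_\lambda$), and that $T_\mu \circ T_\nu$ is again of this form.

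For the forward implication, suppose $H$ is plump and fix $\phi \in S(C^*_\lambda(G))$. The set $\closure{\conv} H\phi$ is weak*-closed and convex, so if $\tau_\lambda \notin \closure{\conv} H\phi$ then Hahn--Banach (the dual of $C^*_\lambda(G)^*$ with its weak* topology being $C^*_\lambda(G)$) yields a self-adjoint $a \in C^*_\lambda(G)$ and $\alpha \in \R$ with $\Re\psi(a) \le \alpha < \tau_\lambda(a)$ for all $\psi \in \closure{\conv} H\phi$. Approximate $a$ in norm by a finitely supported $a_0 = c_e\cdot 1 + \sum_{t \in F} c_t \lambda_t$ with $F \subseteq G \setminus \set{e}$ finite and $\abs{c_e - \tau_\lambda(a)}$ small, so in particular $c_e > \alpha$. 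Applying plumpness to $F$ (using that $H$ is a group to move between $s$ and $s^{-1}$ as needed), choose $\mu$ so that $\norm{T_\mu(\lambda_t)}$ is small for every $t \in F$; then $T_\mu(a)$ is norm-close to $c_e\cdot 1$, so the conjugate $\mu \cdot \phi \in \conv H\phi$ has $\Re(\mu\cdot\phi)(a) \approx c_e > \alpha$, contradicting the separation. Hence $\tau_\lambda \in \closure{\conv} H\phi$.

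The reverse implication is the main work. Assume $\tau_\lambda \in \closure{\conv} H\phi$ for every state $\phi$. The first step is to upgrade this to the norm statement: for every $a \in C^*_\lambda(G)$ one has $\tau_\lambda(a)\cdot 1 \in \closure{\conv}^{\norm{\cdot}} \set{\lambda_s a \lambda_s^* : s \in H}$. Subtracting $\tau_\lambda(a)1$, and via a two-step iteration on real and imaginary parts (using contractivity of the $T_\mu$), it suffices to treat $a = a^*$ with $\tau_\lambda(a) = 0$. Here Sion's minimax theorem enters: $(\mu,\omega) \mapsto \omega(T_\mu(a))$ is affine in the (finitely supported probability) measure $\mu$ and weak*-continuous affine in $\omega \in S(C^*_\lambda(G))$, and $S(C^*_\lambda(G))$ is weak*-compact, so $\inf_\mu \sup_\omega \omega(T_\mu(a)) = \sup_\omega \inf_{s \in H}\omega(\lambda_s a \lambda_s^*) \le 0$, the last inequality because the hypothesis applied to $\omega$ produces conjugates $\mu_n \cdot \omega$ with $\omega(T_{\check\mu_n}(a)) = (\mu_n \cdot \omega)(a) \to \tau_\lambda(a) = 0$. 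Thus for any $\eta > 0$ there is $\mu_1$ with $\max\sigma(T_{\mu_1}(a)) < \eta$, i.e.\ $T_{\mu_1}(a) \le \eta\cdot 1$; applying the same bound to $-T_{\mu_1}(a)$ gives $\mu_2$ with $\min\sigma(T_{\mu_2}T_{\mu_1}(a)) > -\eta$, while positivity and unitality of $T_{\mu_2}$ force $T_{\mu_2}T_{\mu_1}(a) \le \eta\cdot 1$. Since $T_{\mu_2}T_{\mu_1}$ is again an averaging map over $H$, we get $\norm{T_{\mu_2}T_{\mu_1}(a)} \le \eta$, which proves the norm statement.

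The second step deduces plumpness from the norm statement by a Powers-style iteration. Given finite $F = \set{t_1,\dots,t_n} \subseteq G \setminus \set{e}$ and $\varepsilon > 0$: since $\tau_\lambda(\lambda_{t_i}) = 0$, the norm statement gives $\mu_1$ with $\norm{T_{\mu_1}(\lambda_{t_1})} < \varepsilon$; applying it to $T_{\mu_1}(\lambda_{t_2})$ (still of trace zero) gives $\mu_2$ with $\norm{T_{\mu_2}T_{\mu_1}(\lambda_{t_2})} < \varepsilon$, and contractivity of $T_{\mu_2}$ preserves $\norm{T_{\mu_2}T_{\mu_1}(\lambda_{t_1})} < \varepsilon$; iterating over $t_1, \dots, t_n$ produces a single averaging map $T_\mu$, $\mu$ a finitely supported probability on $H$, with $\norm{T_\mu(\lambda_{t_i})} < \varepsilon$ for all $i$. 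Writing $\mu$ with a common denominator (uniform averages with repetition are norm-dense among convex combinations) gives exactly the plumpness inequality. I expect the minimax step to require the most care --- in particular, checking that it correctly replaces the naive and circular-looking attempt to push finitely many states simultaneously toward $\tau_\lambda$; the rest is bookkeeping with the trace-preserving contractions $T_\mu$, and the whole reverse argument is the relativization of the standard proof that Powers' averaging property is equivalent to C*-simplicity.
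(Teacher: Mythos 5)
Your proof is correct, and it is essentially the argument the paper itself relies on: the paper's proof is just the remark that one relativizes Haagerup's Theorem~4.5, whose content is exactly your Hahn--Banach separation in one direction and, in the other, the passage from the state-orbit condition to a one-sided spectral estimate (via a minimax/compactness argument over the weak*-compact state space), upgraded to a norm estimate by composing two trace-preserving averaging maps and then iterated over finitely many group elements. Your explicit appeal to Sion's minimax theorem and the final rational-weights approximation are only cosmetic differences from that standard argument.
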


	\begin{proof}
		The proof is analogous to the proof given in \cite[Theorem~4.5]{haagerup_new_look_c_simplicity}.
	\end{proof}

	\begin{lemma}
		\label{lemma_G_minimal_N_strongly_proximal_implies_N_minimal}
		Assume $N \normal G$ is normal. Then any $G$-minimal, $N$-strongly proximal space $X$ is also $N$-minimal.
	\end{lemma}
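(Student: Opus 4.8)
The plan is to argue by contradiction, using $G$-minimality to produce finitely many translates of a would-be proper $N$-invariant set with empty intersection, and then $N$-strong proximality together with the fact that Dirac masses are extreme points of $P(X)$ to derive a contradiction.

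So suppose $Y \subseteq X$ is a non-empty closed $N$-invariant subset with $Y \neq X$. First I would record the consequence of $G$-minimality: the set $U \defeq X \setminus Y$ is non-empty and open, so by $G$-minimality its $G$-translates cover $X$, and by compactness there exist $g_1, \dots, g_n \in G$ with $X = \bigcup_{i=1}^n g_i U$. Taking complements gives $\bigcap_{i=1}^n g_i Y = \emptyset$. Here normality enters for the first time, but really the key later point is that since $N \normal G$ and $Y$ is $N$-invariant, each $g_i Y$ is again closed and $N$-invariant; consequently each $P(g_i Y)$ is a weak*-closed, $N$-invariant subset of $P(X)$.

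Next I would fix $y_0 \in Y$ and set $\mu \defeq \frac{1}{n} \sum_{i=1}^n \delta_{g_i y_0} \in P(X)$. For any $n' \in N$ and any $i$ we have $n' g_i y_0 = g_i (g_i^{-1} n' g_i) y_0 \in g_i N y_0 \subseteq g_i Y$, so the $i$-th component of $n'\mu$ always stays in $P(g_i Y)$. By $N$-strong proximality there is a net $(n_\alpha)$ in $N$ with $n_\alpha \mu \to \delta_x$ weak* for some $x \in X$. Passing to a subnet (using compactness of $P(X)$), I may assume $\delta_{n_\alpha g_i y_0} \to \eta_i$ for each $i$ simultaneously; since $n_\alpha g_i y_0 \in g_i Y$ and $P(g_i Y)$ is weak*-closed, $\eta_i \in P(g_i Y)$. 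Then $\delta_x = \frac{1}{n}\sum_{i=1}^n \eta_i$, and because $\delta_x$ is an extreme point of $P(X)$ this forces $\eta_i = \delta_x$ for every $i$; hence $x \in \supp(\delta_x) \subseteq g_i Y$ for all $i$, contradicting $\bigcap_{i=1}^n g_i Y = \emptyset$. Therefore $Y = X$, i.e.\ $X$ is $N$-minimal.

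I do not expect a serious obstacle here; the only points requiring a little care are the passage to a single subnet along which all $n$ component measures converge, and the standard fact that a Dirac mass being a finite average of probability measures forces each of them to equal that Dirac mass (iterating extremality of $\delta_x$ in $P(X)$). Normality is used only to guarantee that the translates $g_i Y$ remain $N$-invariant, which is exactly what makes the componentwise argument go through.
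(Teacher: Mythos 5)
Your argument is correct, but it follows a genuinely different route from the paper. The paper's proof is structural: strong proximality forces $X$ to have a unique $N$-minimal component $M$ (distinct minimal components being disjoint); normality then makes every translate $tM$ closed and $N$-invariant, so $M \subseteq tM$ by uniqueness, and applying this to $t^{-1}$ as well gives $tM = M$, whence $M$ is $G$-invariant and equals $X$ by $G$-minimality. You instead argue by contradiction: a proper closed $N$-invariant $Y$ yields, via $G$-minimality and compactness, finitely many translates $g_1Y, \dots, g_nY$ with empty intersection, and then $N$-strong proximality applied to $\frac{1}{n}\sum_i \delta_{g_i y_0}$, together with weak*-closedness of $P(g_iY)$ and extremality of Dirac masses, produces a point lying in every $g_iY$ --- a contradiction. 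Each step of yours checks out: normality is used exactly where you say (to keep the $g_iY$ $N$-invariant), the subnet extraction is harmless since there are only finitely many components, and the extremality argument (or, more directly, $\eta_i \leq n\,\delta_x$ forces $\eta_i = \delta_x$) is standard. What the two approaches buy: the paper's version is shorter and pushes the measure-theoretic content into the (quoted but unproved) fact that strong proximality rules out two disjoint minimal components --- a fact whose verification is essentially the same averaging-and-contracting argument you carry out explicitly. Your proof is therefore somewhat more self-contained, at the cost of being longer; both use the same three hypotheses in the same essential roles.
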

	
	\begin{proof}
		Given that $N$-minimal components are always disjoint, it follows from strong proximality that there can only be exactly one $N$-minimal component in $X$ - call it $M$. Further, we note that any translate $tM$ (where $t \in G$) is still $N$-invariant. Indeed, $NtM = tNM = tM$, and so $M \subseteq tM$ by uniqueness. Using this, we also obtain $tM \subseteq t(t^{-1}M) = M$, and so $M$ is $G$-invariant. But $X$ is assumed to be $G$-minimal, and so $M = X$, i.e. $X$ is $N$-minimal.
	\end{proof}

	\begin{lemma}
	\label{lemma_topologically_free_boundary_implies_free_furstenberg_boundary}
		Assume that $N$ and $X$ are as in \Cref{lemma_G_minimal_N_strongly_proximal_implies_N_minimal}, and $\phi : \boundary_F N \to X$ is a $G$-equivariant continuous map. If, in addition, $X$ is $G$-topologically free, then the action of $G$ on $\boundary_F N$ is free.
	\end{lemma}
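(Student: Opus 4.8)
The goal is to show that freeness of $G \actson \boundary_F N$ follows from the hypotheses. The plan is to argue by contrapositive: suppose some $t \in G \setminus \set{e}$ has nonempty fixed point set $\Fix(t) \subseteq \boundary_F N$; we want to contradict $G$-topological freeness of $X$ via the map $\phi$. By \Cref{frolik_theorem_furstenberg_boundary}, $\Fix(t)$ is clopen in $\boundary_F N$, hence in particular it is a nonempty open set, so its image $\phi(\Fix(t))$ is a nonempty subset of $X$ contained in $\Fix(t) \subseteq X$ (using $G$-equivariance of $\phi$: if $x = \phi(y)$ with $ty = y$, then $tx = t\phi(y) = \phi(ty) = \phi(y) = x$). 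The issue is that a continuous image of an open set need not be open, so this alone does not contradict topological freeness. The key point to exploit is that $\phi$ is a morphism of boundaries when restricted to $N$ (since $X$ is $N$-minimal by \Cref{lemma_G_minimal_N_strongly_proximal_implies_N_minimal} and $N$-strongly proximal, it is an $N$-boundary, and $\boundary_F N$ is the universal one), and morphisms of boundaries are surjective and in fact have a strong "richness" property.

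**The main step.**

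First I would use $N$-strong proximality more carefully. Since $\Fix(t)$ is a nonempty clopen $t$-invariant set in $\boundary_F N$, consider its image $U \defeq \phi(\Fix(t))$. I claim $U$ has nonempty interior in $X$. To see this, recall that for a morphism of $N$-boundaries $\phi : \boundary_F N \to X$, and any nonempty open $V \subseteq \boundary_F N$, there exists $s \in N$ with $sV = \boundary_F N$ — this is a standard consequence of minimality plus strong proximality of the domain (one can push any measure supported off a small set arbitrarily close to a point mass inside $V$, forcing some translate of $V$ to be everything; alternatively cite the relevant statement from Furstenberg boundary theory). Actually, the cleaner route: since $\boundary_F N$ is extremally disconnected and $\Fix(t)$ is clopen, the complement is also clopen; by $N$-minimality there is $s \in N$ with $s \cdot \Fix(t) \supsetneq$ any prescribed point, and by strong proximality combined with the clopen structure one gets that $\phi(\Fix(t))$ cannot be nowhere dense. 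Then $W \defeq \operatorname{int}_X \overline{\phi(\Fix(t))}$ — or better, an honest open subset of $\phi(\Fix(t))$ obtained from the extremally disconnected structure — is a nonempty open set.

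**Deriving the contradiction.**

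Now translate: pick $s \in N$ (using $N$-minimality of $\boundary_F N$, hence of $X$) so that $sW$, or rather so that the $N$-translates of this nonempty open piece cover a large portion of $X$; but more directly, observe that $s \cdot \Fix(t) $ is exactly $\Fix(sts^{-1})$, and since $N$ is normal, $sts^{-1} \in G \setminus \set{e}$, with $\phi(\Fix(sts^{-1})) \subseteq \Fix_X(sts^{-1})$. So if I can arrange that $\phi$ of some translate of $\Fix(t)$ has nonempty interior in $X$, then $\Fix_X(s t s^{-1})$ has nonempty interior, contradicting that $X$ is $G$-topologically free. The essential ingredient, and the step I expect to be the main obstacle, is precisely showing that a nonempty clopen subset of $\boundary_F N$ maps under the boundary morphism $\phi$ to a set with nonempty interior in $X$; this is where I would lean on the rigidity of boundary morphisms — specifically, that $\phi$ is open on clopen sets, or the fact that for every nonempty open $V \subseteq \boundary_F N$ there is $s \in N$ with $sV = \boundary_F N$, which by equivariance forces $\phi(V)$ to have $s^{-1}\phi(\boundary_F N) = s^{-1}X = X$ as a translate, hence $\phi(V) = X$ for that translate, giving far more than nonempty interior. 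Combining this with the clopen-ness of $\Fix(t)$ from Frol\'ik's theorem closes the argument.
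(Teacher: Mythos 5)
Your skeleton matches the paper's argument: argue by contradiction, use Frol\'ik's theorem (\Cref{frolik_theorem_furstenberg_boundary}) to get that $U \defeq \Fix_{\boundary_F N}(t)$ is clopen, observe $\phi(U) \subseteq \Fix_X(t)$ by equivariance, and reduce everything to showing $\phi(U)$ has nonempty interior. But the one step you yourself flag as ``the main obstacle'' is exactly the step you never actually prove, and the justification you offer for it is false. You claim that for any nonempty open $V \subseteq \boundary_F N$ there exists $s \in N$ with $sV = \boundary_F N$; since $s$ acts by a homeomorphism, $sV$ is a proper subset whenever $V$ is, so this can only hold when $V$ is already everything. (What minimality and strong proximality do give is, e.g., that translates of $V$ can capture any finite set of points, or push any measure mostly inside $V$ --- not that a single translate is the whole space.) Your fallback claims --- that ``$\phi$ is open on clopen sets'' or that ``strong proximality combined with the clopen structure'' shows $\phi(U)$ is not nowhere dense --- are asserted, not argued, so the gap remains.

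The paper closes this gap with an elementary covering argument that you do not use: since $U$ is a nonempty \emph{open} set and the action of $N$ on $\boundary_F N$ is minimal (\Cref{lemma_G_minimal_N_strongly_proximal_implies_N_minimal} makes $X$ an $N$-boundary, and $\boundary_F N$ is $N$-minimal), compactness gives finitely many $s_1,\dots,s_n \in N$ with $\boundary_F N = s_1 U \cup \dots \cup s_n U$; applying the surjection $\phi$ yields $X = s_1\phi(U) \cup \dots \cup s_n \phi(U)$, and since $\phi(U)$ is \emph{closed} (continuous image of a compact set), a finite union of closed sets covering $X$ forces one, hence $\phi(U)$ itself, to have nonempty interior. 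This is precisely where clopenness of $U$ is used (openness for the cover, closedness so that the union argument applies --- the paper's remark after the lemma stresses that without closedness the conclusion fails, cf.\ $[0,1] = ([0,1]\cap\Q) \cup ([0,1]\cap\Q^{\complement})$). Your final ``translate and conjugate'' paragraph is unnecessary once this is in place, since $\phi(U) \subseteq \Fix_X(t)$ already contradicts topological freeness directly. As written, your proposal is not a complete proof; replacing the false ``$sV = \boundary_F N$'' claim with the finite-cover-of-translates argument would fix it.
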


	\begin{proof}
		Assume otherwise, so that there is some $t \in G$ with $U \defeq \Fix_{\boundary_F N}(t)$ nonempty. It is known that $U$ is necessarily clopen (see \Cref{frolik_theorem_furstenberg_boundary}). We know that $\boundary_F N = s_1 U \cup \dots \cup s_n U$ for some $s_i \in N$, by minimality and compactness. Thus, $X = s_1 \phi(U) \cup \dots \cup s_n \phi(U)$, and so $\phi(U)$ being closed implies it has non-empty interior. But $\phi(U) \subseteq \Fix_X(t)$, contradicting topological freeness. This shows that the action of $G$ on $\boundary_F N$ is free.
	\end{proof}

	\begin{remark}
		This argument is analogous to the proof of \cite[Lemma~3.2]{breuillard_kalantar_kennedy_ozawa_c_simplicity}, which claims that maps $\pi : Y \to X$ between minimal $G$-spaces map closed sets of nonempty interior to closed sets of nonempty interior. However, without the additional assumption that $U$ is clopen (hence $\phi(U)$ is closed), one cannot conclude that $\phi(U)$ has interior just from having $X = s_1 \phi(U) \cup \dots \cup s_n \phi(U)$. Consider, for example, $[0,1] = ([0,1] \cap \Q) \cup ([0,1] \cap \Q^{\complement})$. Hence, the above lemma also serves as a slight correction to the proof of \cite[Theorem~3.1, $(2) \iff (3)$]{breuillard_kalantar_kennedy_ozawa_c_simplicity}, which uses \cite[Lemma~3.2]{breuillard_kalantar_kennedy_ozawa_c_simplicity} as a prerequisite.
	\end{remark}

	\begin{remark}
	\label{remark_sven_proof}
		Before beginning the proof of \Cref{normal_plump_subgroup_amrutam_equivalence}, the author would like to thank Sven Raum for giving a much cleaner proof of $(3) \implies (2)$, which is the argument presented here. The original proof can be found in \Cref{section_universal_G_minimal_H_strongly_proximal}.
	\end{remark}

	\begin{proof}[Proof of \Cref{normal_plump_subgroup_amrutam_equivalence}]
		The implications $(4) \implies (2) \implies (1)$ are given in \cite[Theorem~3.1, Corollary~3.2]{amrutam_plump_subgroups}. Further, $(4) \iff (5)$ follows easily from \cite[Theorem~1.4]{breuillard_kalantar_kennedy_ozawa_c_simplicity}. It is clear that $(2) \implies (3)$ holds, as $\boundary_F N$ is such a space.
		
		To show that $(3) \implies (2)$ holds, assume $X$ is such a space. By \Cref{lemma_G_minimal_N_strongly_proximal_implies_N_minimal}, we have that $X$ is in fact $N$-minimal, and therefore an $N$-boundary. Hence, we obtain an $N$-equivariant continuous map $\phi : \boundary_F N \to X$. We claim it is $G$-equivariant. Letting $s \in N$ and $t \in G$, and slightly abusing notation by directly viewing these as automorphisms on $\boundary_F N$ and $X$, we have that
		$$ (t \circ \phi \circ t^{-1})(sy) = (t \circ \phi)(t^{-1}st t^{-1}y) = t t^{-1}st (\phi(t^{-1}y)) = s ((t \circ \phi \circ t^{-1})(y)). $$
		As morphisms between boundaries (in this case, $N$-equivariant continuous maps) are necessarily unique, we have that $t \circ \phi \circ t^{-1} = \phi$, or in other words, $\phi$ is $G$-equivariant. By \Cref{lemma_topologically_free_boundary_implies_free_furstenberg_boundary}, we have that the action of $G$ on $\boundary_F N$ is free.
		
		It remains to show $(1) \implies (4)$. To this end, we note that $N$ is C*-simple by \Cref{remark_plump_implies_c_simple}. Assume it is not the case that $C_G(N) = \set{e}$, and choose a nontrivial amenable subgroup $K \leq C_G(N)$. We know that the canonical character $1_K : K \to \C$ extends to a *-homomorphism $1_K : C^*_\lambda(K) \to \C$, and that there is also a canonical conditional expectation $E_K : C^*_\lambda(G) \to C^*_\lambda(K)$ mapping $\lambda_t$ to itself if $t \in K$, and zero otherwise. It is easy to check that the composition $1_K \circ E_K : C^*_\lambda(G) \to \C$ is an $N$-fixed state, which contradicts \Cref{lemma_plump_dualization}.
	\end{proof}

	We now aim to prove \Cref{normal_plump_subgroup_relative_simplicity_equivalence}. Some easy observations about relative simplicity as defined in \Cref{definition_relative_c_simple} are in place.
	
	\begin{proposition}
	\label{relative_simplicity_containment}
		Let $A$, $B$, and $C$ denote unital C*-algebras.
		\begin{enumerate}
			\item $A$ is relatively simple in itself if and only if it is simple.
			\item If $A \subseteq B \subseteq C$ with $A$ relatively simple in $C$, then $B$ is simple.
			\item If $A \subseteq B \subseteq C$ with $A$ relatively simple in $C$, then $A$ is relatively simple in $B$ and $B$ is relatively simple in $C$.
		\end{enumerate}
	\end{proposition}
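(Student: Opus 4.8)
The plan is to reduce the proposition to two standard facts: that a C*-algebra is simple precisely when every unital $*$-representation of it on a Hilbert space is faithful, and Arveson's extension theorem, which allows a unital completely positive (u.c.p.)\ map from a unital sub-C*-algebra of $C$ into $B(\mathcal{H})$ to be extended to a u.c.p.\ map on all of $C$.

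For (1): taking $B = A$ in \Cref{definition_relative_c_simple}, a u.c.p.\ map $\phi : A \to B(\mathcal{H})$ that is a $*$-homomorphism on $A$ is nothing but a unital $*$-representation of $A$, so $A$ is relatively simple in itself exactly when every such representation is injective. If $A$ is simple this holds, because the kernel of a unital $*$-representation is a closed two-sided ideal not containing the unit, hence is $\set{0}$. Conversely, a closed two-sided ideal $I$ with $\set{0} \neq I \neq A$ yields a non-injective unital $*$-representation of $A$ by composing the quotient map $A \to A/I$ with a faithful unital representation of the nonzero unital C*-algebra $A/I$; so $A$ is not relatively simple in itself.

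For (3): to see that $B$ is relatively simple in $C$, note that if $\phi : C \to B(\mathcal{H})$ is u.c.p.\ and a $*$-homomorphism on $B$, then its restriction to $A \subseteq B$ is again a $*$-homomorphism, so $\phi$ is faithful on $C$ by the hypothesis that $A$ is relatively simple in $C$. To see that $A$ is relatively simple in $B$, take a u.c.p.\ map $\phi : B \to B(\mathcal{H})$ that is a $*$-homomorphism on $A$, extend it by Arveson's theorem to a u.c.p.\ map $\psi : C \to B(\mathcal{H})$, observe that $\psi$ restricted to $A$ equals $\phi|_A$ and is therefore still a $*$-homomorphism on $A$, conclude via relative simplicity of $A$ in $C$ that $\psi$ is faithful on $C$, and hence that $\phi = \psi|_B$ is faithful on $B$.

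For (2): the cleanest route is formal — apply (3) to the chain $A \subseteq B \subseteq C$ to get that $B$ is relatively simple in $C$, then apply (3) again to the chain $B \subseteq B \subseteq C$ to get that $B$ is relatively simple in itself, and conclude with (1). (Equivalently and directly: a closed two-sided ideal $I$ of $B$ with $\set{0} \neq I \neq B$ would give a non-faithful unital representation of $B$, which Arveson extends to a u.c.p.\ map on $C$ that is a $*$-homomorphism on $B$, hence on $A$, but not faithful on $C$ — contradicting relative simplicity of $A$ in $C$.) I expect no genuine obstacle here; each item is a short manipulation, and the only non-formal ingredient throughout is Arveson's extension theorem, used to transport u.c.p.\ maps along the inclusions.
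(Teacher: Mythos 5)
Your proposal is correct and follows essentially the same route as the paper: Arveson extension for part (3), the equivalence of faithfulness and injectivity for $*$-homomorphisms for part (1), and the purely formal reduction of (2) to (3) and (1). The only (inessential) difference is that in (2) you pass through ``$B$ relatively simple in $C$'' and the chain $B \subseteq B \subseteq C$, whereas the paper passes through ``$A$ relatively simple in $B$'' and the chain $A \subseteq B \subseteq B$.
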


	\begin{proof}
		First, to prove $(3)$, let $\phi : B \to B(\mathcal{H})$ be a unital completely positive map which is a *-homomorphism on $A$. This extends to a unital completely positive map $\wtilde{\phi} : C \to B(\mathcal{H})$, which is faithful by assumption, and so $\phi$ is faithful, showing $A$ is relatively simple in $B$. It is clear that $B$ is relatively simple in $C$ almost by definition. Claim $(1)$ follows from the fact that for *-homomorphisms, faithfulness and injectivity are equivalent. Finally, $(2)$ follows by applying $(3)$ to get that $A$ is relatively simple in $B$, then applying $(3)$ again to the containment $A \subseteq B \subseteq B$ to conclude that $B$ is relatively simple in itself, and finally applying $(1)$.
	\end{proof}

	We also make use of the following lemma:
	
	\begin{lemma}
	\label{lemma_plump_pseudoexpectation_equivalence}
		Assume $H \leq G$. Then $H$ is plump in $G$ if and only if the only $H$-equivariant unital completely positive map $\phi : C^*_\lambda(G) \to C(\boundary_F H)$ is the canonical trace $\tau_\lambda$.
	\end{lemma}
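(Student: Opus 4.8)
The plan is to run both implications through \Cref{lemma_plump_dualization}, which rephrases plumpness of $H$ as: $\tau_\lambda \in \closure{\conv}(H\phi)$ for every $\phi \in S(C^*_\lambda(G))$. First note the statement is well-posed: viewing $\tau_\lambda$ as the map $a \mapsto \tau_\lambda(a)\cdot 1$ into the constants $\C\cdot 1 \subseteq C(\boundary_F H)$, it is an $H$-equivariant unital completely positive map (using that $\tau_\lambda$ is a trace). More generally, I would use the dictionary between $H$-equivariant unital completely positive maps $\Phi : C^*_\lambda(G) \to C(\boundary_F H)$ and weak*-continuous $H$-equivariant maps $q_\Phi : \boundary_F H \to S(C^*_\lambda(G))$, given by $q_\Phi(x)(a) = \Phi(a)(x)$; here $H$ acts on $C^*_\lambda(G)$ by conjugation and on $S(C^*_\lambda(G))$ in the induced (weak*-continuous) way, and unravelling the $H$-equivariance of $\Phi$ (for the translation action on $C(\boundary_F H)$) gives $s\cdot q_\Phi(x) = q_\Phi(sx)$. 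Under this correspondence $\Phi = \tau_\lambda$ matches the constant map $x \mapsto \tau_\lambda$, so the lemma amounts to saying that, under plumpness, every such $q_\Phi$ is constant at $\tau_\lambda$.

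For the forward implication, assuming $H$ plump, I would take such a $\Phi$ with associated $q := q_\Phi$ and set $K := q(\boundary_F H) \subseteq S(C^*_\lambda(G))$, a weak*-compact $H$-invariant set. Being the continuous $H$-equivariant image of the boundary $\boundary_F H$, the set $K$ is itself an $H$-boundary: minimality is immediate, and strong proximality follows because the pushforward $q_* : P(\boundary_F H) \to P(K)$ is a continuous $H$-equivariant affine surjection, so any $\nu \in P(K)$ lifts to some $\tilde\nu \in P(\boundary_F H)$, strong proximality of $\boundary_F H$ gives a net $(s_i)$ in $H$ with $s_i\tilde\nu$ converging to a Dirac mass, and pushing forward shows $s_i\nu$ converges to a Dirac mass on $K$. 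Applying \Cref{lemma_plump_dualization} to the state $q(x_0)$ (for any fixed $x_0$) puts $\tau_\lambda$ in $\closure{\conv}(H\cdot q(x_0)) \subseteq \closure{\conv}(K)$, so (as $K$ is compact) $\tau_\lambda$ is the barycenter of some $\mu \in P(K)$. Since $\tau_\lambda$ is $H$-fixed and the barycenter map is $H$-equivariant, $s\mu$ has barycenter $\tau_\lambda$ for all $s \in H$; choosing by strong proximality of $K$ a net $(s_i)$ with $s_i\mu \to \delta_\psi$ for some $\psi \in K$ and passing to barycenters yields $\tau_\lambda = \psi \in K$. Then $\set{\tau_\lambda}$ is a nonempty closed $H$-invariant subset of the minimal $H$-space $K$, so $K = \set{\tau_\lambda}$, i.e.\ $q$ is constant at $\tau_\lambda$, i.e.\ $\Phi = \tau_\lambda$.

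For the converse I would argue contrapositively. Suppose $H$ is not plump, so by \Cref{lemma_plump_dualization} there is $\phi_0 \in S(C^*_\lambda(G))$ with $\tau_\lambda \notin C := \closure{\conv}(H\phi_0)$. By Zorn's lemma pick a minimal nonempty closed convex $H$-invariant subset $C_0 \subseteq C$; by minimality $C_0$ has no proper such subset, i.e.\ it is an irreducible compact convex $H$-space, so by Glasner's structure theorem it is affinely $H$-isomorphic to $P(Z)$ for some $H$-boundary $Z$. Identifying $Z$ with the Dirac masses inside $P(Z) \cong C_0 \subseteq C \subseteq S(C^*_\lambda(G))$ and using the universal property of $\boundary_F H$ to obtain an $H$-equivariant continuous surjection $\boundary_F H \to Z$, I get a weak*-continuous $H$-equivariant map $q : \boundary_F H \to S(C^*_\lambda(G))$ whose image is this copy of $Z$. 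Its associated map $\Phi(a)(x) := q(x)(a)$ is then an $H$-equivariant unital completely positive map $C^*_\lambda(G) \to C(\boundary_F H)$ (complete positivity because each $q(x)$ is a state). If $\Phi$ equalled $\tau_\lambda$, then $Z = \set{\tau_\lambda}$, whence $C_0 \cong P(Z)$ is the single point $\set{\tau_\lambda} \subseteq C$, contradicting $\tau_\lambda \notin C$. So $\Phi \neq \tau_\lambda$, and $H$ fails the uniqueness property, which is what had to be shown.

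The step I expect to be the main obstacle is the converse, specifically the appeal to Glasner's description of irreducible compact convex $G$-spaces (equivalently, that $P(\boundary_F H)$ is the universal irreducible compact convex $H$-space): this is what upgrades the failure of plumpness into an \emph{honest} second $H$-equivariant unital completely positive map into $C(\boundary_F H)$. A secondary point demanding care throughout is keeping the three actions in play straight --- conjugation on $C^*_\lambda(G)$, its transpose on $S(C^*_\lambda(G))$, and translation on $C(\boundary_F H)$ --- so that the identity $s\cdot q_\Phi(x) = q_\Phi(sx)$ and the passage between $\Phi$ and $q_\Phi$ come out with the correct variances.
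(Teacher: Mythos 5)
Your argument is correct and is essentially the paper's own proof written out in detail: the paper likewise dualizes plumpness via \Cref{lemma_plump_dualization}, invokes Glasner's theorem that the closure of the extreme points of an irreducible closed convex $H$-invariant subset of $S(C^*_\lambda(G))$ is an $H$-boundary, and then runs the standard correspondence between $H$-boundaries in the state space and $H$-equivariant unital completely positive maps $C^*_\lambda(G) \to C(\boundary_F H)$ (the analogue of Kennedy's Proposition~3.1), exactly as you do in both directions. One small correction to your converse: Glasner's result gives only that $Z \defeq \closure{\ex(C_0)}$ is an $H$-boundary contained in $C_0$, not that $C_0$ is affinely $H$-isomorphic to $P(Z)$ (universality of $P(\boundary_F H)$ yields a surjection onto $C_0$, not such an isomorphism) --- but this weaker statement is all you need, since $Z \subseteq C_0 \subseteq C$ already avoids $\tau_\lambda$, so the resulting map $\boundary_F H \to Z$ produces the desired non-trace equivariant unital completely positive map.
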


	\begin{proof}
		Observe that \Cref{lemma_plump_dualization} tells us that, because $\tau_\lambda$ is $H$-invariant, $H$ is plump if and only if there are no nontrivial $H$-irreducible closed convex subsets of $S(C^*_\lambda(G))$. As the closure of the extreme points of any such subset is an $H$-boundary \cite[Theorem~III.2.3]{glasner_proximal_flows}, this is equivalent to there being no nontrivial $H$-boundaries in $S(C^*_\lambda(G))$. From here, the proof is analogous to that of \cite[Proposition~3.1]{kennedy_c_simplicity_intrinsic}.
	\end{proof}

	\begin{proof}[Proof of \Cref{normal_plump_subgroup_relative_simplicity_equivalence}]
		$(1) \implies (3)$ This argument is adapted from part of the proof of $(2) \implies (1)$ in \cite[Theorem~3.1]{breuillard_kalantar_kennedy_ozawa_c_simplicity}. Assume $\phi : C(\boundary_F N) \rtimes_\lambda G \to B(\mathcal{H})$ is unital and completely positive, and also a *-homomorphism on $C^*_\lambda(N)$. We may equip $B(\mathcal{H})$ with an $N$-action given by $s \cdot T = \phi(\lambda_s) T \phi(\lambda_s)^*$ for $s \in N$ and $T \in B(\mathcal{H})$. Further, $\phi$ is $N$-equivariant with respect to this action on $B(\mathcal{H})$, as $C^*_\lambda(N)$ lies in the multiplicative domain of $\phi$. We also have that, by injectivity, there is an $N$-equivariant unital completely positive map $\psi : B(\mathcal{H}) \to C(\boundary_F N)$. We note that $\psi \circ \phi : C(\partial_F N) \rtimes_\lambda G \to C(\boundary_F N)$ restricts to the canonical trace on $C^*_\lambda(G)$ by \Cref{lemma_plump_pseudoexpectation_equivalence}. Furthermore, $\psi \circ \phi$ is the identity on $C(\boundary_F N)$ by rigidity, and so $C(\boundary_F N)$ lies in the multiplicative domain of this map. Combining these two observations yields that $\psi \circ \phi$ is the canonical expectation, which is faithful, and so $\phi$ is faithful.
		
		$(3) \implies (2)$ This follows from \Cref{relative_simplicity_containment}.
		
		$(2) \implies (1)$ Assume $(1)$ does not hold. We know that $N$ must be C*-simple by \Cref{relative_simplicity_containment}, and so looking back at \Cref{normal_plump_subgroup_amrutam_equivalence}, it must be the case that $C_G(N) \neq \set{e}$. Choose any nontrivial amenable subgroup $K$ of $C_G(N)$, and note that $N \cap K = \set{e}$, as $N$ has trivial center (C*-simplicity implies trivial amenable radical). Thus, $NK \isoto N \times K$, and so $C^*_\lambda(NK) \isoto C^*_\lambda(N) \tensor C^*_\lambda(K)$. Letting $\lambda_N : C^*_\lambda(N) \to B(\ell^2(N))$ denote the extension of the left-regular representation of $N$ to $C^*_\lambda(N)$, and $1_K : C^*_\lambda(K) \to \C$ the extension of the trivial character, we have that $\lambda_N \tensor 1_K : C^*_\lambda(N) \tensor C^*_\lambda(K) \to B(\ell^2(N))$ is a non-faithful *-homomorphism. Thus, any unital completely positive extension $\phi : C^*_\lambda(G) \to B(\ell^2(N))$ will be non-faithful, yet be a *-homomorphism on $C^*_\lambda(N)$.
		
		$(3) \implies (4)$ This follows from \Cref{relative_simplicity_containment}.
		
		$(4) \implies (1)$ This implication will be quite similar to $(2) \implies (1)$. Assume that $(1)$ doesn't hold, and observe that by \Cref{relative_simplicity_containment}, $C(\boundary_F N) \rtimes_\lambda N$ is simple, which is known to imply that $N$ is C*-simple (see \Cref{c_simplicity_equivalence_dynamical}). Again, $C_G(N) \neq \set{e}$, and choosing any nontrivial amenable subgroup $K \leq C_G(N)$, we have $NK \isoto N \times K$. Further, $K$ acts trivially on $\boundary_F N$ by \cite[Lemma~5.3]{breuillard_kalantar_kennedy_ozawa_c_simplicity}, and so $C(\boundary_F N) \rtimes_\lambda (NK) \isoto (C(\boundary_F N) \rtimes_\lambda N) \tensor C^*_\lambda(K)$. Letting $\pi : C(\boundary_F N) \rtimes_\lambda N \to B(\mathcal{H})$ be any (necessarily faithful) representation, and $1_K : C^*_\lambda(K) \to \C$ be the extension of the trivial character, we have that $\pi \tensor 1_K : (C(\boundary_F N) \rtimes_\lambda N) \tensor C^*_\lambda(K) \to B(\mathcal{H})$ is a non-faithful *-homomorphism. Any unital completely positive extension to $C^*_\lambda(G)$ will contradict the assumption of $(4)$.
	\end{proof}

	\begin{proof}[Proof of \Cref{normal_plump_subgroup_residually_normal_equivalence}]
		$(2) \implies (1)$ Assume $(1)$ does not hold. Applying \Cref{normal_plump_subgroup_amrutam_equivalence}, we either have that $N$ is not C*-simple, or $C_G(N) \neq \set{e}$. If $N$ is not C*-simple, then by Kennedy's intrinsic characterization of C*-simplicity (see \Cref{c_simplicity_equivalence_intrinsic}), $N$ has a nontrivial amenable $N$-residually normal subgroup. If, on the other hand, $C_G(N) \neq \set{e}$, then any nontrivial amenable subgroup of $C_G(N)$ is $N$-residually normal.
		
		$(1) \implies (2)$ This is analogous to \cite[Remark~4.2]{kennedy_c_simplicity_intrinsic}. For convenience, we give the modified argument here. Assume $(2)$ doesn't hold, and $K$ is a nontrivial amenable $N$-residually normal subgroup of $G$. Amenability tells us that there is some $K$-invariant measure $\mu \in P(\boundary_F N)$. Strong proximality tells us that there is a net $(s_\lambda) \subseteq N$ with $s_\lambda \mu \to \delta_x$ for some $x \in \boundary_F N$. Dropping to a subnet, we may assume that $(s_\lambda K s_\lambda^{-1})$ is also convergent to some $L$, and $L \neq \set{e}$ by assumption. Chopping off the start of our net, we may in addition assume that there is some $l \in L \setminus \set{e}$ with $l \in s_\lambda K s_\lambda^{-1}$ for all $\lambda$, i.e. $l = s_\lambda k_\lambda s_\lambda^{-1}$ for some $k_\lambda \in K$. From here, we note that
		$$ l s_\lambda \mu \to l \delta_x = \delta_{lx}, $$
		while we also have
		$$ l s_\lambda \mu = s_\lambda k_\lambda \mu = s_\lambda \mu \to \delta_x. $$
		This shows $lx = x$, and so $G$ cannot act freely on $\boundary_F N$.
		
		$(2) \iff (3)$ If $K$ is any nontrivial amenable $N$-residually normal subgroup of $G$, then any $N$-minimal subset of the closed $N$-orbit of $K$ is an $N$-uniformly recurrent subgroup. Conversely, any element of an $N$-uniformly recurrent subgroup is $N$-residually normal by definition.
	\end{proof}
		
	We conclude this section with some remarks.

	\begin{remark}
	\label{remark_relative_c_simplicity_countability}
		Countability of $N$ or $G$ is not a requirement for any of the above proofs, nor is it required for any of the C*-simplicity analogues of the above characterizations obtained by setting $N = G$ (see \Cref{remark_c_simple_countability}), some of which were applied here.
	\end{remark}

	\begin{remark}
		Some of the characterizations we have given are closed under taking supergroups. Namely, if $H \leq G$ is any (not necessarily normal) subgroup that satisfies any of \Cref{normal_plump_subgroup_amrutam_equivalence} $(3)$ or $(5)$, \Cref{normal_plump_subgroup_relative_simplicity_equivalence} $(2)$, or \Cref{normal_plump_subgroup_residually_normal_equivalence} $(2)$ or $(3)$, then so does any subgroup of $G$ containing $H$. In particular, any normal subgroup of $G$ containing $H$ is relatively C*-simple.
	\end{remark}

	\section{The universal $G$-minimal, $H$-strongly proximal space}
	\label{section_universal_G_minimal_H_strongly_proximal}
	
	This section was originally dedicated to proving $(3) \implies (2)$ in \Cref{normal_plump_subgroup_amrutam_equivalence}, until a much cleaner proof was suggested by Sven Raum - see \Cref{remark_sven_proof}. The existence of a type of relative Furstenberg boundary with respect to arbitrary (not necessarily normal) subgroups might still be interesting, and for this reason this section is still kept.
	
	\begin{proposition}
	\label{universal_G_minimal_H_strongly_proximal_space_existence_uniqueness}
		Assume $H$ is a (not necessarily normal) subgroup of $G$. There is a universal $G$-minimal, $H$-strongly proximal $G$-space $B(G,H)$, in the sense that any other such space $X$ is a $G$-equivariant continuous image of $B(G,H)$. Further, this space is unique up to $G$-equivariant homeomorphism.
	\end{proposition}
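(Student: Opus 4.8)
The plan is to adapt, essentially verbatim, the classical construction of the Furstenberg boundary, replacing ``$G$-strong proximality'' by ``$H$-strong proximality'' throughout, and then to deduce uniqueness from a rigidity statement for morphisms in the relative setting.

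\emph{Existence.} First I would observe that every $G$-minimal $G$-space $X$ is a $G$-equivariant continuous image of $\beta G$ (with the $G$-action extending left translation): fixing $x_0 \in X$, the orbit map $g \mapsto g x_0$ extends to a continuous $G$-map $\beta G \to X$ whose image is compact and dense in $X$, hence all of $X$. Consequently there is, up to $G$-equivariant homeomorphism, only a \emph{set} $\mathcal{B}$ of $G$-minimal, $H$-strongly proximal $G$-spaces, since each arises as a $G$-equivariant quotient of $\beta G$ and such quotients are classified by closed $G$-invariant unital subalgebras of $C(\beta G)$. I then form the product $Y \defeq \prod_{X \in \mathcal{B}} X$ with the diagonal action and pick a $G$-minimal subflow $B(G,H) \subseteq Y$ (Zorn's lemma). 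For each $X \in \mathcal{B}$ the restriction to $B(G,H)$ of the coordinate projection $Y \to X$ is a $G$-map with closed $G$-invariant image, hence surjective by $G$-minimality of $X$; so $B(G,H)$ maps onto every member of $\mathcal{B}$, and therefore onto every $G$-minimal, $H$-strongly proximal space. Since a closed $G$-invariant subset of an $H$-strongly proximal space is again $H$-strongly proximal (the $H$-orbit closure of a measure supported on the subset stays in the subset and contains a point mass), it only remains to prove that $Y$ itself is $H$-strongly proximal.

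This last point is the main obstacle: I must show that an \emph{arbitrary} product of $G$-minimal, $H$-strongly proximal spaces is $H$-strongly proximal. For this I would use the characterization that a $G$-space $Z$ is $H$-strongly proximal if and only if every $H$-minimal subset of $P(Z)$ consists of point masses (immediate from the definition, since an $H$-minimal set is the $H$-orbit closure of each of its points). Given $\mu \in P(Y)$, choose an $H$-minimal $M \subseteq \closure{H\mu} \subseteq P(Y)$. Each coordinate projection $P(Y) \to P(X)$ carries $M$ onto an $H$-minimal subset of $P(X)$, which by the characterization consists of point masses; thus every $\nu \in M$ has all one-dimensional marginals equal to point masses. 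A Radon probability measure on a compact product all of whose one-dimensional marginals are point masses is itself a point mass: this follows from the identity $\nu(\bigcap_\alpha K_\alpha) = \inf_\alpha \nu(K_\alpha)$ for a downward-directed net of closed subsets $K_\alpha$ of a compact Hausdorff space, applied to the net of finite-coordinate cylinder sets cutting down to the relevant point. Hence $M$ consists of point masses, so $\closure{H\mu}$ contains a point mass, and $Y$ is $H$-strongly proximal.

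\emph{Uniqueness.} I would first prove the rigidity lemma: between any two $G$-minimal, $H$-strongly proximal $G$-spaces there is at most one $G$-equivariant continuous map. Applying $H$-strong proximality to $\frac{1}{2}(\delta_x + \delta_{x'})$ shows that any two points of such a space are $H$-proximal, hence $G$-proximal since $H \leq G$; it follows that for a $G$-minimal, $H$-strongly proximal space $Y$ the diagonal is the unique $G$-minimal subset of $Y \times Y$. Given two $G$-maps $\phi, \psi : X \to Y$, the image of the $G$-map $x \mapsto (\phi(x), \psi(x))$ is a $G$-minimal subset of $Y \times Y$, hence equals the diagonal, so $\phi = \psi$. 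Uniqueness of $B(G,H)$ is then formal: if $B_1$ and $B_2$ are both universal, universality yields surjective $G$-maps $\alpha : B_1 \to B_2$ and $\beta : B_2 \to B_1$, and by rigidity $\beta \circ \alpha = \id_{B_1}$ and $\alpha \circ \beta = \id_{B_2}$, so $\alpha$ is a $G$-equivariant homeomorphism.
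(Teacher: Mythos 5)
Your proof is correct and follows essentially the same route as the paper: a cardinality bound via $\beta G$, the product of all $G$-minimal, $H$-strongly proximal spaces, a $G$-minimal subflow whose coordinate projections are surjective by minimality, and rigidity of morphisms for uniqueness. The only notable difference is internal to the key lemma that the product is $H$-strongly proximal, where you argue via $H$-minimal subsets of $P(Y)$ and prove explicitly that a Radon measure all of whose marginals are point masses is itself a point mass --- a fact the paper's finite-coordinate cluster-point argument uses implicitly.
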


	\begin{proof}
		The proof is quite similar to the topological proof of the existence of the Furstenberg boundary, a very brief sketch of which is given in \cite[Proposition~4.6]{furstenberg_boundary_theory}. We fill in the details and modify the argument appropriately here.
		
		Let $\set{X_\alpha}_{\alpha \in A}$ denote the set of all up-to-isomorphism $G$-minimal, $H$-strongly proximal spaces, where isomorphism refers to $G$-equivariant homeomorphism. Note that these can indeed be put into a set, as all of these spaces are necessarily the continuous image of $\beta G$ by minimality, so there is a limit on the cardinality of these spaces.
		
		We claim that the space $X \defeq \prod_\alpha X_\alpha$ is still $H$-strongly proximal. To see this, given any measure $\mu \in P(X)$, we note that for any $\alpha \in A$, there is a net $(h_\lambda) \subseteq H$ with $(h_\lambda \mu)$ converging to a Dirac mass when restricted to $C(X_\alpha)$. From here, it is easy to see that this can be done on finitely many $\alpha_1, \dots, \alpha_n \in A$. Now for each finite $F \subseteq A$, letting $\mu_F \in \closure{H\mu}$ be a Dirac mass when restricted to each $C(X_\alpha)$ for $\alpha \in F$, we note that any cluster point of the net $(\mu_F)$ (indexed over finite subsets of $A$, ordered under inclusion) is necessarily a Dirac mass on the entire space $X$.
		
		Let $B(G,H)$ be a $G$-minimal subset of $X$. It is clear that this space is still $H$-strongly proximal. We will also show that it is universal. Given any $X_\alpha$, consider the coordinate projection map $\pi_\alpha : X \to X_\alpha$. We see that $\pi_\alpha|_{B(G,H)} : B(G,H) \to X_\alpha$ is still surjective, as the image of this map is closed and $G$-invariant, and $X_\alpha$ is $G$-minimal.
		
		Finally, this space is unique up to isomorphism. Indeed, if $B'$ is another universal space, then there are $G$-equivariant continuous maps $\phi_1 : B(G,H) \to B'$ and $\phi_2 : B' \to B(G,H)$. But their compositions $\phi_2 \circ \phi_1 : B(G,H) \to B(G,H)$ and $\phi_1 \circ \phi_2 : B' \to B'$ are necessarily the respective identity maps between these spaces, as these spaces are both $G$-boundaries, and, assuming they exist, morphisms between $G$-boundaries are unique.
	\end{proof}

	\begin{remark}
		A different notion of relative Furstenberg boundary is presented in \cite{monod_relative_furstenberg_boundary}, and so we avoid using the term \textit{Furstenberg boundary} and notation $\boundary(G,H)$ to describe the universal object from \Cref{universal_G_minimal_H_strongly_proximal_space_existence_uniqueness}. Our notation $B(G,H)$ is derived from Furstenberg's notation $B(G)$ for the Furstenberg boundary of $G$, given in \cite[Proposition~4.6]{furstenberg_boundary_theory}.
	\end{remark}	

	\begin{corollary}
	\label{universal_G_minimal_N_strongly_proximal_is_N_furstenberg_boundary}
		If $N \normal G$ is normal, the universal $G$-minimal, $N$-strongly proximal space is $\boundary_F N$.
	\end{corollary}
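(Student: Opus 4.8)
The plan is to verify that $\boundary_F N$ itself satisfies the universal property characterizing $B(G, N)$ in \Cref{universal_G_minimal_H_strongly_proximal_space_existence_uniqueness} (with $H = N$ there), and then to invoke the uniqueness-up-to-$G$-equivariant-homeomorphism clause of that proposition to conclude $B(G, N) \isoto \boundary_F N$.

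First I would check that $\boundary_F N$ is an admissible candidate. Since $N \normal G$, the $N$-action on $\boundary_F N$ extends uniquely to a $G$-action, so $\boundary_F N$ is a $G$-space. By construction it is $N$-minimal and $N$-strongly proximal; moreover $N$-minimality implies $G$-minimality, since any closed $G$-invariant subset is in particular closed and $N$-invariant. Hence $\boundary_F N$ is a $G$-minimal, $N$-strongly proximal $G$-space.

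Next I would show $\boundary_F N$ is universal for this class. Let $X$ be any $G$-minimal, $N$-strongly proximal space. By \Cref{lemma_G_minimal_N_strongly_proximal_implies_N_minimal}, $X$ is $N$-minimal, hence an $N$-boundary, so universality of the Furstenberg boundary of $N$ provides an $N$-equivariant continuous surjection $\phi : \boundary_F N \to X$. Exactly as in the proof of $(3) \implies (2)$ in \Cref{normal_plump_subgroup_amrutam_equivalence}, for each $t \in G$ the map $t \circ \phi \circ t^{-1}$ is again an $N$-equivariant continuous map $\boundary_F N \to X$, so by uniqueness of morphisms between $N$-boundaries it equals $\phi$; that is, $\phi$ is $G$-equivariant. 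Thus $\boundary_F N$ has the defining universal property of $B(G, N)$.

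Finally I would appeal to the uniqueness part of \Cref{universal_G_minimal_H_strongly_proximal_space_existence_uniqueness}: both $\boundary_F N$ and $B(G, N)$ are $G$-boundaries (being $G$-minimal and $N$-strongly proximal, hence $G$-strongly proximal) and both satisfy the universal property, so the mutually inverse $G$-equivariant continuous maps between them compose to identities, giving $B(G, N) \isoto \boundary_F N$. There is no serious obstacle here, as everything reduces to facts already established; the only point requiring care is the $G$-equivariance of $\phi$, which is handled by the same computation as in \Cref{normal_plump_subgroup_amrutam_equivalence}.
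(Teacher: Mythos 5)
Your proof is correct, but it is organized differently from the paper's. The paper applies the universal property of $B(G,N)$ to the space $\boundary_F N$ to get a $G$-equivariant surjection $\phi_1 : B(G,N) \to \boundary_F N$, uses \Cref{lemma_G_minimal_N_strongly_proximal_implies_N_minimal} to see that $B(G,N)$ is an $N$-boundary and hence receives an $N$-equivariant map $\phi_2 : \boundary_F N \to B(G,N)$, and then notes that $\phi_1 \circ \phi_2$ is an $N$-equivariant self-map of $\boundary_F N$, hence the identity by rigidity; this forces $\phi_2$, and then $\phi_1$, to be bijective, so $\phi_1$ is the desired $G$-equivariant homeomorphism. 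Crucially, the paper never has to check that any map \emph{out of} $\boundary_F N$ is $G$-equivariant -- indeed, it records immediately after the corollary that the existence of $G$-equivariant maps onto arbitrary $G$-minimal, $N$-strongly proximal spaces is a \emph{consequence} of the corollary. You instead verify directly that $\boundary_F N$ itself satisfies the universal property, which requires upgrading the $N$-equivariant map $\phi : \boundary_F N \to X$ to a $G$-equivariant one via the normality computation ($t \circ \phi \circ t^{-1} = \phi$ by uniqueness of $N$-boundary morphisms), and then you invoke the uniqueness clause of \Cref{universal_G_minimal_H_strongly_proximal_space_existence_uniqueness}. All the individual steps you use are sound (the extension of the $N$-action to $G$, $N$-minimality and $N$-strong proximality implying their $G$-counterparts, and the equivariance computation, which is independent of the corollary, so there is no circularity). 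What your route buys is that the ``subtle point'' remark following the corollary becomes automatic; what it costs is that you reuse the equivariance computation from the proof of $(3) \implies (2)$ of \Cref{normal_plump_subgroup_amrutam_equivalence}, which the paper's \Cref{section_universal_G_minimal_H_strongly_proximal} argument was deliberately structured to avoid, since that section is meant to give an alternative to precisely that argument.
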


	\begin{proof}
		Letting $B(G,N)$ denote the universal such space, there is a $G$-equivariant continuous surjection $\phi_1 : B(G,N) \to \boundary_F N$. However, \Cref{lemma_G_minimal_N_strongly_proximal_implies_N_minimal} tells us that $B(G,N)$ is in fact an $N$-boundary, and so there is an $N$-equivariant continuous surjection $\phi_2 : \boundary_F N \to B(G,N)$. The composition $\phi_1 \circ \phi_2 : \boundary_F N \to \boundary_F N$ is $N$-equivariant, and thus necessarily the identity map. This shows $\phi_2$ is injective, hence bijective. Thus, $\phi_1$ is also bijective, and therefore the isomorphism we are looking for.
	\end{proof}

	It is worth emphasizing a subtle point - \Cref{lemma_G_minimal_N_strongly_proximal_implies_N_minimal} tells us that any $G$-minimal, $N$-strongly proximal space is the $N$-equivariant image of $\boundary_F N$. However, \Cref{universal_G_minimal_N_strongly_proximal_is_N_furstenberg_boundary} gives us a $G$-equivariant map.
	
	\begin{proof}[Proof of \Cref{normal_plump_subgroup_amrutam_equivalence}, $(3) \implies (2)$]
		Assume $X$ is $G$-minimal, $N$-strongly proximal, and $G$-topologically free. By \Cref{universal_G_minimal_N_strongly_proximal_is_N_furstenberg_boundary}, there is a $G$-equivariant continuous map $\phi : \boundary_F N \to X$. By \Cref{lemma_topologically_free_boundary_implies_free_furstenberg_boundary}, the action of $G$ on $\boundary_F N$ is free.
	\end{proof}

	\section{Examples}
	
	It is worth noting that the characterization of being C*-simple and having trivial centralizer, originally given as a sufficient condition in \cite[Corollary~3.2]{amrutam_plump_subgroups}, is perhaps the ``nicest'' characterization of relative C*-simplicity. As such, some of the examples below will still be proven with this result, as opposed to our new results.
	
	\subsection{Free products}
	
	Given that the canonical example of a C*-simple group is $\F_2$, the free group on two generators, it is worthwhile to use this as an easy example. We will re-prove the following special case of \cite[Example~3.8]{amrutam_plump_subgroups} using one of our new results.
	
	\begin{example}
		Let $\F_2 = \left<a,b\right>$ denote the free group on two generators. Then the normal closure of $a$ is relatively C*-simple in $\F_2$.
	\end{example}

	\begin{proof}
		The Nielsen-Schreier theorem tells us that any subgroup of a free group is free. Thus, the only nontrivial amenable subgroups of $\F_2$ are the cyclic subgroups. Given any such subgroup $\left<x\right>$, assume first that the reduced word of $x$ starts with $b$ or $b^{-1}$. Then the reduced word length of $a^n x a^{-n}$ is eventually strictly increasing, showing that $\left<a^n x a^{-n}\right> \to \set{e}$ in the Chabauty topology. This is also true if the reduced word of $x$ ends with $b$ or $b^{-1}$. Finally, if both the start and end of $x$ lie in $\set{a,a^{-1}}$, then the reduced word length of $(bab^{-1})^n x (bab^{-1})^{-n}$ is strictly increasing, and so $\left<(bab^{-1})^n x (bab^{-1})^{-n}\right>$ is Chabauty-convergent to $\set{e}$. By \Cref{normal_plump_subgroup_residually_normal_equivalence}, we are done.
	\end{proof}

	We will also generalize \cite[Example~3.8]{amrutam_plump_subgroups} to free products as follows:
	
	\begin{theorem}
		Assume $G = H*K$, where $H$ and $K$ are nontrivial, and they are also not both $\Z_2$. Then any nontrivial normal subgroup of $G$ is relatively C*-simple.
	\end{theorem}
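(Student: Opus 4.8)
The plan is to verify the ``nicest'' characterization from \Cref{normal_plump_subgroup_amrutam_equivalence}, namely that $N$ is C*-simple and $C_G(N) = \set{e}$, rather than working directly with plumpness or with residual normality. Let $N \normal G$ be a nontrivial normal subgroup, where $G = H * K$ with $H, K$ nontrivial and not both $\Z_2$. The first task is to show $N$ is C*-simple. Since $N$ is normal in $G$ and $G$ is a free product satisfying the hypotheses, $G$ itself is C*-simple (free products $H * K$ with the stated restrictions are C*-simple by the classical results going back to Powers-type arguments, e.g. those recovered from \Cref{c_simplicity_equivalence_powers_averaging}, or one can cite that such free products have trivial amenable radical and apply the intrinsic characterization). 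Then one invokes the fact that a nontrivial normal subgroup of a C*-simple group is itself C*-simple --- this is precisely \cite[Theorem~1.4]{breuillard_kalantar_kennedy_ozawa_c_simplicity}, already used in the excerpt for $(4) \iff (5)$ of \Cref{normal_plump_subgroup_amrutam_equivalence}. So C*-simplicity of $N$ comes essentially for free.

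The substantive part is showing $C_G(N) = \set{e}$. Here I would use the well-known structure of centralizers in free products: in $G = H * K$, the centralizer of any element not conjugate into $H$ or $K$ is cyclic (generated by a root), and more importantly, if $g$ commutes with a nontrivial element of $N$, then either $g$ lies in a conjugate of a free factor or $g$ is a power of a common root. The key point to exploit is that $N$, being nontrivial and normal, is ``large'' --- in particular it cannot be contained in (a conjugate of) a single free factor unless that factor is normal, which forces the other factor to be trivial, contradicting our hypotheses. So $N$ contains elements of ``infinite order and cyclically reduced length $\geq 2$'' (or at least elements not conjugate into a factor), and in a free product an element centralizing such a thing must be a power of the same primitive root; one then pushes this to a contradiction using normality of $N$ (if $z \in C_G(N)$ is nontrivial, then $z$ commutes with every $gng^{-1}$ for $n \in N$, forcing $z$ to simultaneously be a root-power aligned with many independent cyclically reduced words, impossible unless $z = e$). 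The cleanest way to organize this: show first that $N \not\subseteq$ any conjugate of $H$ or of $K$ (using that $N \normal G$ and the factors are proper), then pick $n \in N$ cyclically reduced of length $\geq 2$, observe $C_G(n)$ is infinite cyclic, and finally show $\bigcap_{g \in G} C_G(gng^{-1}) = \set{e}$ by a length/cancellation argument --- the conjugates $gng^{-1}$ range over too many distinct cyclically reduced words for a single nontrivial element to centralize all of them.

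The main obstacle is the case analysis in the centralizer computation, particularly handling torsion in the free factors and the borderline configurations (e.g.\ $H = \Z_2$, $K$ anything nontrivial but not $\Z_2$, or both factors finite), where ``cyclically reduced length $\geq 2$'' elements of $N$ still exist but the argument that no nontrivial $z$ centralizes all conjugates needs care. I expect one can dispatch this uniformly by noting that $N$ always contains an element $n$ that is not conjugate into a free factor (otherwise $N$ would be contained in the union of conjugates of the factors, which for a normal subgroup forces $N$ into one factor, hence that factor normal, hence the other trivial), and for such $n$ the centralizer $C_G(n)$ is infinite cyclic, say $\langle w \rangle$ with $w$ cyclically reduced; then for a nontrivial $z \in C_G(N)$ we would need $z \in C_G(n) \cap C_G(ana^{-1})$ for a suitable $a$ chosen so that $n$ and $ana^{-1}$ have distinct (non-commuting) primitive roots, giving $z = e$. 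Finally, I would remark that this recovers \cite[Example~3.8]{amrutam_plump_subgroups} as the special case $G = \F_2$, $H = K = \Z$, and that the hypothesis excluding $H = K = \Z_2$ is necessary since $\Z_2 * \Z_2$ is the infinite dihedral group, which is amenable and has nontrivial center-like behavior, so no nontrivial subgroup is relatively C*-simple.
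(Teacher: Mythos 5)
Your proposal is correct in its overall strategy and its first half coincides with the paper's proof: both establish C*-simplicity of $G = H*K$ from the classical free-product results (the paper cites \cite{paschke_salinas_free_products_c_simple}), pass to C*-simplicity of the normal subgroup $N$ via \cite[Theorem~1.4]{breuillard_kalantar_kennedy_ozawa_c_simplicity}, and reduce everything to showing $C_G(N) = \set{e}$ using characterization $(4)$ of \Cref{normal_plump_subgroup_amrutam_equivalence}. Where you diverge is the centralizer step. The paper picks nontrivial $x \in N$, $y \in C_G(N)$, notes $N \cap C_G(N) = \set{e}$ (trivial center from C*-simplicity of $N$), so $\langle x,y\rangle \isoto \langle x\rangle \times \langle y\rangle$, and then applies the Kurosh subgroup theorem to this abelian subgroup: it must either be $\Z$ (impossible for a direct product of two nontrivial cyclic groups) or lie in a conjugate of a free factor, and the latter is killed by a three-letter normal-form computation ($t$ cannot commute with $rsr^{-1}$ for $s,t \in H$, $r \in K$ nontrivial). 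Your route instead goes through the structure of centralizers in free products: $N$ contains an element $n$ not conjugate into a factor, $C_G(n)$ is infinite cyclic, and a nontrivial $z \in C_G(N)$ dies upon intersecting the centralizers of two non-commuting conjugates of $n$. This works, and it is a standard Bass--Serre-flavored argument, but note that your key lemma (a nontrivial normal subgroup contains an element not conjugate into a factor) is exactly where the real work sits, and your justification as written is loose: ``$N$ in the union of conjugates of the factors forces $N$ into one factor, hence that factor normal, hence the other trivial'' is not the right deduction---the correct statements are that ruling out ``all elements conjugate into factors'' itself needs Kurosh or a tree/fixed-point argument, and that $N \subseteq gHg^{-1}$ is impossible for a nontrivial normal $N$ because distinct conjugates of a factor intersect trivially. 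So your approach ends up invoking Kurosh-type structure theory anyway, just packaged differently; the paper's version is shorter because Kurosh is applied once, directly to $\langle x\rangle \times \langle y\rangle$, with no discussion of roots or axes. One genuine error to remove: your parenthetical suggestion that C*-simplicity of $G$ follows from ``trivial amenable radical plus the intrinsic characterization'' is not valid---trivial amenable radical is strictly weaker than the absence of amenable residually normal subgroups, so keep the Powers/Paschke--Salinas citation as the justification. Your closing remark that the exclusion of $\Z_2 * \Z_2$ is necessary (infinite dihedral is amenable) is correct and consistent with the paper.
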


	\begin{proof}
		It is known that any such group is C*-simple \cite{paschke_salinas_free_products_c_simple}. Hence, any normal subgroup $N \normal G$ is C*-simple as well by \cite[Theorem~1.4]{breuillard_kalantar_kennedy_ozawa_c_simplicity}. It remains to show that any nontrivial normal subgroup has trivial centralizer. Assume otherwise, so that there exists some normal subgroup $N \neq \set{e}$ with $C_G(N) \neq \set{e}$, and pick nontrivial elements $x \in N$ and $y \in C_G(N)$. C*-simplicity of $N$ tells us that $N$ has trivial center, i.e. $N \cap C_G(N) = \set{e}$, and so $\left<x,y\right> \isoto \left<x\right> \times \left<y\right>$. But the Kurosh subgroup theorem tells us that
		$$ \left<x\right> \times \left<y\right> = \F_X * \prod_{i \in I}^* s_i H_i s_i^{-1} * \prod_{j \in J}^* t_j K_j t_j^{-1} $$
		for some subset $X \subseteq G$ and subgroups $H_i \leq H$, $K_j \leq K$. There are two cases when such a subgroup is abelian. The first case is if $X$ is a singleton, and $I$ and $J$ are empty. This is impossible, as $\left<x\right> \times \left<y\right>$ is not isomorphic to $\Z$. The second case is if, without loss of generality, $\left<x\right> \times \left<y\right>$ is some conjugate $s_i H_i s_i^{-1}$, where $H_i \leq H$. Equivalently, $\langle s_i^{-1} x s_i \rangle \times \langle s_i^{-1} y s_i \rangle$ is a subgroup of $H$. But both $N$ and $C_G(N)$ are normal subgroups of $G$, and so this says that there are some nontrivial $s \in N$ and $t \in C_G(N)$ that both lie in $H$. However, if we choose any nontrivial $r \in K$, then $t$ cannot commute with $rsr^{-1} \in N$, a contradiction.
	\end{proof}

	\subsection{Direct products}
	
	Taking direct sums and direct products of existing examples can provide some easy new examples:
	
	\begin{lemma}
	\label{lemma_direct_sum_product_of_c_simple_is_c_simple}
		Let $(G_i)$ be a family of C*-simple groups. Then both $\directsum G_i$ and $\prod G_i$ are C*-simple.
	\end{lemma}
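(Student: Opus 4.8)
The plan is to verify Powers' averaging property and invoke its equivalence with C*-simplicity (\Cref{c_simplicity_equivalence_powers_averaging}), which by \Cref{remark_c_simple_countability} is available with no countability hypothesis. The two standard facts I will lean on are the identification $C^*_\lambda(K_1 \times K_2) \isoto C^*_\lambda(K_1) \tensor C^*_\lambda(K_2)$ (minimal tensor product), under which $\lambda_{(s,t)}$ corresponds to $\lambda_s \tensor \lambda_t$, and the fact that $\norm{x \tensor u} = \norm{x}$ whenever $u$ is unitary.

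First I would dispose of a \emph{finite} product $G_1 \times \dots \times G_n$. Given $\varepsilon > 0$ and finite $F \subseteq (G_1 \times \dots \times G_n) \setminus \set{e}$, let $F_i \defeq \setbuilder{t_i}{t = (t_1,\dots,t_n) \in F,\ t_i \neq e} \subseteq G_i \setminus \set{e}$. Applying Powers' averaging property in each $G_i$ to $F_i$ and $\varepsilon$ (using the tuple $(e)$ when $F_i = \emptyset$) produces tuples $a^{(i)}_1, \dots, a^{(i)}_{p_i} \in G_i$ with $\norm{\frac{1}{p_i}\sum_k \lambda_{a^{(i)}_k}\lambda_g\lambda_{a^{(i)}_k}^*} < \varepsilon$ for all $g \in F_i$. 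Averaging over the product tuples $(a^{(1)}_{k_1},\dots,a^{(n)}_{k_n})$, of which there are $m = p_1 \cdots p_n$, one gets for each $t = (t_1,\dots,t_n) \in F$ that
$$ \frac{1}{m}\sum_j \lambda_{s_j}\lambda_t\lambda_{s_j}^* = \bigotimes_{i=1}^n \left( \frac{1}{p_i}\sum_k \lambda_{a^{(i)}_k}\lambda_{t_i}\lambda_{a^{(i)}_k}^* \right). $$
Each tensor leg has norm at most $1$, and since $t \neq e$ some $t_{i_0} \neq e$ lies in $F_{i_0}$, so that leg has norm $< \varepsilon$; multiplicativity of the norm on tensors gives norm $< \varepsilon$. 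Hence finite products of C*-simple groups are C*-simple.

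Now let $G$ be either $\directsum_i G_i$ or $\prod_i G_i$, and fix $\varepsilon > 0$ and finite $F \subseteq G \setminus \set{e}$. For each $t \in F$ choose a coordinate $i(t)$ with $t_{i(t)} \neq e$, and let $S$ be the finite set of all these $i(t)$. Decompose $G \isoto A \times B$ with $A \defeq \prod_{i \in S} G_i$ and $B$ the product of the remaining factors; then $A$ is C*-simple by the finite-product case, and the coordinate projection $\pi : G \to A$ is a homomorphism with $\pi(t) \neq e$ for every $t \in F$. Applying Powers' averaging property in $A$ to the finite set $\pi(F) \subseteq A \setminus \set{e}$ gives $a_1,\dots,a_m \in A$ with $\norm{\frac{1}{m}\sum_j \lambda_{a_j}\lambda_{\pi(t)}\lambda_{a_j}^*} < \varepsilon$ for all $t \in F$. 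Viewing $s_j \defeq (a_j, e) \in A \times B = G$ and writing $t = (\pi(t), b_t)$, under $C^*_\lambda(G) \isoto C^*_\lambda(A) \tensor C^*_\lambda(B)$ we obtain $\frac{1}{m}\sum_j \lambda_{s_j}\lambda_t\lambda_{s_j}^* = \left(\frac{1}{m}\sum_j \lambda_{a_j}\lambda_{\pi(t)}\lambda_{a_j}^*\right) \tensor \lambda_{b_t}$, whose norm is $< \varepsilon$. Thus $G$ has Powers' averaging property and is C*-simple.

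The only genuine subtlety — and the reason one cannot simply argue "an increasing union of C*-simple groups is C*-simple" — is that the full product $\prod_i G_i$ is \emph{not} the union of its finite subproducts, so there is no direct reduction to finitely many coordinates; the remedy is to absorb the finitely many coordinates that witness nontriviality of $F$ into a finite subproduct $A$ and carry the (possibly enormous) complementary factor $B$ as an inert tensor leg. Everything else — the identification $C^*_\lambda(A \times B) \isoto C^*_\lambda(A) \tensor C^*_\lambda(B)$ and the behaviour of the norm under tensoring — is routine.
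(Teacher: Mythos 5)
Your argument is correct, but it takes a genuinely different route from the paper. The paper's proof is dynamical: it notes that each $G_i$ acts freely on $\boundary_F G_i$, checks that the product $\prod_i \boundary_F G_i$ is a minimal, strongly proximal, free action of both $\directsum G_i$ and $\prod G_i$, and concludes via \Cref{c_simplicity_equivalence_dynamical} (a free, hence topologically free, boundary action). You instead verify Powers' averaging property directly and invoke \Cref{c_simplicity_equivalence_powers_averaging}, using the identification $C^*_\lambda(A \times B) \isoto C^*_\lambda(A) \tensor C^*_\lambda(B)$: first the finite-product case by tensoring the averaging operators coordinatewise and using multiplicativity of the norm on simple tensors, then the general case by absorbing the finitely many coordinates witnessing nontriviality of $F$ into a finite subproduct $A$ and carrying the complementary factor as an inert unitary tensor leg. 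Both routes avoid countability assumptions, as noted in \Cref{remark_c_simple_countability}. What the paper's approach buys is brevity, though its ``not hard to check'' step (minimality and strong proximality of the product boundary, especially for the direct sum, and the passage of strong proximality to infinite products) requires a limiting argument of the kind spelled out in \Cref{universal_G_minimal_H_strongly_proximal_space_existence_uniqueness}; what your approach buys is a self-contained, quantitative argument whose only inputs are standard tensor-product facts, and your reduction of an infinite product to a finite subproduct plus an inert leg is exactly the right way to handle the fact that $\prod_i G_i$ is not the union of its finite subproducts. All steps check out: the identity $\frac{1}{m}\sum_j \lambda_{s_j}\lambda_t\lambda_{s_j}^* = \bigotimes_i \bigl( \frac{1}{p_i}\sum_k \lambda_{a^{(i)}_k}\lambda_{t_i}\lambda_{a^{(i)}_k}^* \bigr)$ is just multilinearity, the legs with $t_i = e$ are the identity, and $\norm{x \tensor u} = \norm{x}$ for $u$ unitary holds in the minimal tensor product.
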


	\begin{proof}
		We know that each $G_i$ acts freely on its Furstenberg boundary $\boundary_F G_i$ (see \Cref{c_simplicity_equivalence_dynamical}). From here, it is not hard to check that $\prod \boundary_F G_i$ is a free boundary action for both $\directsum G_i$ and $\prod G_i$, and so both of these groups are C*-simple by the same theorem.
	\end{proof}
	
	\begin{theorem}
		Let $(G_i)$ be a family of groups with relatively C*-simple normal subgroups $N_i \normal G_i$. The direct sum $\directsum N_i$ is relatively C*-simple in the direct product $\prod G_i$.
	\end{theorem}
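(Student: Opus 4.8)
The plan is to reduce everything to the characterization of relative C*-simplicity for normal subgroups given in \Cref{normal_plump_subgroup_amrutam_equivalence}, specifically the equivalence of plumpness (i.e. relative C*-simplicity, by the convention for normal subgroups) with condition $(4)$: $N \normal G$ is relatively C*-simple in $G$ if and only if $N$ is C*-simple and $C_G(N) = \set{e}$. The very first thing to check is that $\directsum N_i$ is a normal subgroup of $\prod G_i$: given $(g_i)_i \in \prod G_i$ and a finitely supported $(n_i)_i \in \directsum N_i$, the conjugate $(g_i n_i g_i^{-1})_i$ is again finitely supported with each coordinate in $N_i$ (since $N_i \normal G_i$), so it lies in $\directsum N_i$. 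Thus \Cref{normal_plump_subgroup_amrutam_equivalence} is applicable to the pair $\directsum N_i \normal \prod G_i$.

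Next I would verify the two conditions of part $(4)$. For C*-simplicity of $\directsum N_i$: each $N_i$, being relatively C*-simple in $G_i$, is C*-simple by \Cref{normal_plump_subgroup_amrutam_equivalence}, and hence $\directsum N_i$ is C*-simple by \Cref{lemma_direct_sum_product_of_c_simple_is_c_simple}. For the centralizer condition, suppose $(g_i)_i \in C_{\prod G_i}(\directsum N_i)$ and fix an index $j$. For any $n \in N_j$, the element of $\directsum N_i$ equal to $n$ in coordinate $j$ and $e$ in all other coordinates is finitely supported, hence lies in $\directsum N_i$, so it must commute with $(g_i)_i$; reading off coordinate $j$ gives $g_j n = n g_j$. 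Therefore $g_j \in C_{G_j}(N_j)$, which is trivial by \Cref{normal_plump_subgroup_amrutam_equivalence} applied to $N_j \normal G_j$. Since $j$ was arbitrary, $(g_i)_i = e$, so $C_{\prod G_i}(\directsum N_i) = \set{e}$.

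Combining these two observations with \Cref{normal_plump_subgroup_amrutam_equivalence} yields that $\directsum N_i$ is relatively C*-simple in $\prod G_i$. I do not expect a genuine obstacle here; the only mild point worth flagging is that the use of the direct sum (rather than an arbitrary intermediate subgroup between $\directsum N_i$ and $\prod N_i$) is what makes the single-coordinate witnesses available inside the subgroup and what lets \Cref{lemma_direct_sum_product_of_c_simple_is_c_simple} deliver C*-simplicity; all the real content has already been isolated in the earlier results.
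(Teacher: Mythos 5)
Your proposal is correct and follows essentially the same route as the paper: verify normality, get C*-simplicity of $\directsum N_i$ from \Cref{lemma_direct_sum_product_of_c_simple_is_c_simple}, show the centralizer is trivial (the paper states it equals $\prod C_{G_i}(N_i)$, which your coordinate-wise argument spells out), and conclude via condition $(4)$ of \Cref{normal_plump_subgroup_amrutam_equivalence}. No issues.
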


	\begin{proof}
		Observe that $\directsum N_i$ is normal in $\prod G_i$, and the commutator of this subgroup is $\prod C_{G_i}(N_i)$. By \Cref{normal_plump_subgroup_amrutam_equivalence}, this commutator is trivial, and so applying this theorem again together with \Cref{lemma_direct_sum_product_of_c_simple_is_c_simple}, $\directsum N_i$ is relatively C*-simple in $\prod G_i$.
	\end{proof}

	\begin{remark}
		This shows that there exists an uncountable group with a countable relatively C*-simple normal subgroup, for example $\directsum_{n \in \N} \F_2 \normal \prod_{n \in \N} \F_2$. From the C*-algebras perspective, there is a non-separable C*-algebra with a separable relatively simple sub-C*-algebra.
	\end{remark}

	\subsection{Wreath products}
	
	Recall that the (unrestricted) wreath product $G \wr H$ is $(\prod_H G) \sdprod H$, where $H$ acts by left-translation on $\prod_H G$.
	
	\begin{theorem}
	\label{relative_simplicity_wreath_products}
		Assume $N$ is a relatively C*-simple subgroup of some group $G \neq \set{e}$, and let $H$ be any arbitrary group. Then the direct sum $\directsum_H N$ is relatively C*-simple in $G \wr H$.
	\end{theorem}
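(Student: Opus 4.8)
The plan is to verify the characterization from \Cref{normal_plump_subgroup_amrutam_equivalence}, namely that $\directsum_H N$ is normal in $G \wr H$, that it is C*-simple, and that its centralizer in $G \wr H$ is trivial. Normality is essentially immediate: $\directsum_H N$ is normalized by each copy of $G$ inside $\prod_H G$ because $N \normal G$, and it is normalized by the $H$-action since $H$ permutes the coordinates of $\prod_H G$ and hence permutes the coordinates of $\directsum_H N$ among themselves. For C*-simplicity of $\directsum_H N$, I would invoke \Cref{lemma_direct_sum_product_of_c_simple_is_c_simple}: since $N$ is relatively C*-simple in $G$, by \Cref{normal_plump_subgroup_amrutam_equivalence} it is in particular C*-simple, and a direct sum of C*-simple groups is C*-simple.

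The main work is computing $C_{G \wr H}(\directsum_H N)$ and showing it is trivial. I would write a general element of $G \wr H$ as $(f, h)$ with $f \in \prod_H G$ and $h \in H$, and a general element of $\directsum_H N$ as $(g, e)$ where $g \in \directsum_H N$. A direct computation of the commutator condition $(f,h)(g,e)(f,h)^{-1} = (g', e)$ shows that $(f,h)$ centralizes $\directsum_H N$ precisely when, for every finitely-supported $N$-valued function $g$, one has $f_x \, g_{h^{-1}x} \, f_x^{-1} = g_x$ for all $x \in H$; this forces two things. First, taking $g$ supported at a single coordinate and using that $N \neq \set{e}$ (since $N$ is C*-simple, in particular nontrivial), one deduces $h = e$, i.e. the $H$-component must be trivial. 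Second, with $h = e$ the condition becomes $f_x g_x f_x^{-1} = g_x$ for all $x$ and all finitely-supported $g \in \directsum_H N$, which says each $f_x$ lies in $C_G(N)$. But $C_G(N) = \set{e}$ by \Cref{normal_plump_subgroup_amrutam_equivalence} (applied to the relatively C*-simple $N \normal G$), so $f = e$ as well. Hence the centralizer is trivial.

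With normality, C*-simplicity, and trivial centralizer established, \Cref{normal_plump_subgroup_amrutam_equivalence} (the equivalence of $(1)$ and $(4)$, together with the convention identifying plumpness with relative C*-simplicity) yields that $\directsum_H N$ is relatively C*-simple in $G \wr H$.

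The step I expect to be the main obstacle is the bookkeeping in the commutator computation in $\prod_H G \sdprod H$ — in particular being careful that it is the \emph{restricted} direct sum $\directsum_H N$ that sits inside the \emph{unrestricted} product $\prod_H G$, so that the centralizing function $f$ ranges over all of $\prod_H G$ while the test elements $g$ range only over finitely supported functions. The key point making the argument go through is that finitely supported test functions already suffice to detect membership in $C_G(N)$ coordinatewise and to force $h = e$, so the unrestricted nature of $\prod_H G$ causes no trouble.
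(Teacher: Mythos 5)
Your proposal is correct, but it takes a genuinely different route from the paper's. You verify characterization $(4)$ of \Cref{normal_plump_subgroup_amrutam_equivalence} for $\directsum_H N \normal G \wr H$: C*-simplicity of the direct sum via \Cref{lemma_direct_sum_product_of_c_simple_is_c_simple}, and triviality of the centralizer by a direct commutator computation in $(\prod_H G) \sdprod H$; your bookkeeping is right --- the condition $f_x g_{h^{-1}x} f_x^{-1} = g_x$ for all finitely supported $g$ forces $h = e$ (test on $g$ supported at one coordinate) and then $f_x \in C_G(N) = \set{e}$ coordinatewise, and finitely supported test elements indeed suffice even though $f$ ranges over the unrestricted product. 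The paper instead verifies characterization $(3)$: it shows $\prod_H \boundary_F N$ is a $G \wr H$-minimal, $\directsum_H N$-strongly proximal, $G \wr H$-topologically free space, where topological freeness for elements with nontrivial $H$-part uses that $\boundary_F N$ has no isolated points. Your argument is shorter and more elementary; the paper's dynamical route has the side benefit of exhibiting an explicit topologically free boundary action with uncountably many fixed points, which is precisely what the remark following \Cref{relative_simplicity_wreath_products} exploits to contrast with Amrutam's countability hypotheses. Two minor points: like the paper's proof, yours implicitly reads the hypothesis as $N \normal G$ (otherwise $\directsum_H N$ need not be normal in $G \wr H$), which is the intended reading; and your parenthetical ``C*-simple, in particular nontrivial'' is not quite the right justification, since the trivial group has simple reduced C*-algebra $\C$ --- instead $N \neq \set{e}$ follows from $C_G(N) = \set{e}$ together with $G \neq \set{e}$, or, as the paper notes, from relative C*-simplicity of $N$ in $G \neq \set{e}$.
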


	\begin{proof}
		Note that $\directsum_H N$ is normal in $G \wr H$. It is easy to check that the canonical action of $\prod_H G$ on $\prod_H \boundary_F N$, together with the action of $H$ on $\prod_H \boundary_F N$ by left-translation, extend to an action of all of $G \wr H$. It is also not hard to see that $\directsum_H N$ acts strongly proximally and $G \wr H$ acts minimally on this space.
				
		It remains to show that the action of $G \wr H$ is still topologically free. To this end, first consider any nontrivial element of the form $((g_h),e) \in G \wr H$. Its fixed point set is given by $\prod_H \Fix(g_h)$, which is empty by \Cref{normal_plump_subgroup_amrutam_equivalence} and the assumption that at least one $g_h \neq e$. Now given any element $((g_h),h_0) \in G \wr H$ with $h_0 \neq e$, we note that
		$$ ((g_h),h_0)(x_h) = (g_h x_{h_0^{-1}h}), $$
		and so $(x_h)$ lies in the fixed point set of this element if and only if $g_h x_{h_0^{-1}h} = x_h$ for all $h$. In particular, setting $h = h_0$ gives us $g_{h_0} x_e = x_{h_0}$. If $\Fix((g_h),h_0)$ were to have interior, then it would contain a basic open subset of the form $\prod_H U_h$, where $U_h \subseteq X$ is open, and all but finitely many $U_h = X$. Given that $N$ is C*-simple and $N \neq \set{e}$ (as $N$ is relatively C*-simple in $G \neq \set{e}$ by assumption) we know that $\boundary_F N$ has no isolated points \cite[Proposition~3.15]{kalantar_kennedy_boundaries}, and so no $U_h$ is a singleton. But given that $x_{h_0}$ is entirely determined by the value $x_e$ takes, this cannot be the case. We conclude that $\Fix((g_h),h_0)$ has empty interior, and so by \Cref{normal_plump_subgroup_amrutam_equivalence}, we are done.
	\end{proof}

	\begin{remark}
		The sufficient condition for plumpness given in \cite[Lemma~3.5]{amrutam_plump_subgroups} assumes the group is countable and has countable fixed point sets. The proof of \Cref{relative_simplicity_wreath_products}, however, gives a natural class of topologically free boundary actions admitting uncountably many fixed points. Here, we see that $H \leq G \wr H$ fixes the diagonal of $\prod_H \boundary_F N$, and $\boundary_F N$ is uncountable as it is a nontrivial compact Hausdorff space with no isolated points. One could also replace $\boundary_F N$ by any $G$-minimal, $N$-strongly proximal, $G$-topologically free space $X$, and so any element $((g_h),e) \in G \wr H$ admits the fixed point set $\prod_H \Fix(g_h)$, which is uncountable if, for example, $\Fix(g_h)$ are nonempty for all $h$, and at least one $g_h = e$. Finally, while wreath products $G \wr H$ are often uncountable (for example, if $G \neq \set{e}$ and $H$ is infinite), the same observations hold for the restricted wreath product $(\directsum_H G) \sdprod H$ as well, which is countable if $G$ and $H$ are countable.
	\end{remark}

	\subsection{Groups with trivial center and only cyclic amenable subgroups}
	
	It is sometimes the case that the only amenable subgroups of a given group are the cyclic subgroups. For example, this property is true of the free groups by the Nielsen-Schreier theorem. Our aim is to show the following:
	
	\begin{theorem}
	\label{only_cyclic_amenable_subgroups_plump}
		Assume $G$ is such that any amenable subgroup is cyclic, $Z(G) = \set{e}$, and in addition, no two elements have finite coprime order. Then any nontrivial normal subgroup of $G$ is relatively C*-simple.
	\end{theorem}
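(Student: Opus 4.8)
The plan is to reduce the claim to the equivalence $(4)$ of \Cref{normal_plump_subgroup_amrutam_equivalence}, namely that a normal subgroup $N \normal G$ is relatively C*-simple if and only if $N$ is C*-simple and $C_G(N) = \set{e}$. So I would split the proof into two parts: showing that any nontrivial normal $N \normal G$ is C*-simple, and showing that $C_G(N) = \set{e}$.

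For C*-simplicity of $N$: since every amenable subgroup of $G$ is cyclic, the same is true of $N$, so the amenable subgroups of $N$ are all cyclic, hence in particular $N$ has no nontrivial amenable normal subgroups unless $N$ itself is (virtually) cyclic. One should first dispense with the degenerate case: if $N$ were cyclic it would be amenable and normal, hence contained in the amenable radical; but a nontrivial cyclic normal subgroup would force $G$ to have nontrivial center or a finite-index centralizer situation — more precisely, $C_G(N)$ has finite index (at most $2$) in $G$, and since $Z(G) = \set{e}$ this is readily seen to be impossible when $G$ is nonamenable (which it is, as we will see it is C*-simple once we rule this out). Granting $N$ is not cyclic, I would invoke Kennedy's intrinsic characterization (\Cref{c_simplicity_equivalence_intrinsic} applied to $N$, or rather \Cref{normal_plump_subgroup_residually_normal_equivalence}): a nontrivial amenable $N$-residually normal subgroup $K \leq N$ would have to be cyclic, $K = \langle x \rangle$, and then one argues that conjugating $x$ by suitable elements drives $\langle x \rangle$ to $\set{e}$ in the Chabauty topology, using that $Z(G) = \set{e}$ so $x$ is not central and some conjugate moves it far. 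This is the delicate step and likely follows the pattern of \cite{kennedy_c_simplicity_intrinsic} for groups with only cyclic amenable subgroups.

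For $C_G(N) = \set{e}$: suppose not, and pick nontrivial $x \in N$, $y \in C_G(N)$. Since $N$ is C*-simple it has trivial amenable radical, hence trivial center, so $N \cap C_G(N) = \set{e}$ and $\langle x, y\rangle \isoto \langle x \rangle \times \langle y \rangle$. This product is abelian, hence amenable, hence cyclic by hypothesis — but a nontrivial direct product of two nontrivial cyclic groups is cyclic only when the two factors have finite coprime orders, which is exactly forbidden by the hypothesis that no two elements have finite coprime order. This yields the contradiction.

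The main obstacle I anticipate is the C*-simplicity of $N$ itself — specifically handling the interaction between the ``only cyclic amenable subgroups'' hypothesis and normality, and making the Chabauty-convergence argument work uniformly (one must produce, for a fixed cyclic $\langle x \rangle$, a sequence of conjugators in $N$, not merely in $G$, driving it to the trivial subgroup). The condition $Z(G) = \set{e}$ is what should power this: it guarantees every nontrivial element has a nontrivial conjugation orbit, and combined with the structure of $G$ this should let one escape to the trivial subgroup. Once $N$ is shown C*-simple, the rest is the short algebraic argument above.
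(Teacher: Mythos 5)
Your second half, the proof that $C_G(N) = \set{e}$, is correct and is essentially the paper's own argument: granted that $N$ is C*-simple, $Z(N) = N \cap C_G(N) = \set{e}$, so nontrivial $x \in N$ and $y \in C_G(N)$ generate $\langle x \rangle \times \langle y \rangle$, which is abelian, hence amenable, hence cyclic by hypothesis; and your remark that cyclicity of such a product forces the factors to have finite coprime orders is exactly where the last hypothesis enters (the paper uses this implicitly).

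The genuine gap is the C*-simplicity step, which is where all the work in this theorem actually lies. You reduce it to showing that a nontrivial cyclic $N$-residually normal subgroup $\langle x \rangle$ can be driven to $\set{e}$ in the Chabauty topology by conjugation, and you justify this only by ``$Z(G) = \set{e}$, so $x$ is not central and some conjugate moves it far.'' That is not a deduction: trivial center gives a nontrivial conjugation orbit of $x$, but says nothing about producing conjugators $s_n$ with $s_n \langle x \rangle s_n^{-1} \to \set{e}$, which is precisely the statement to be proved. The paper devotes \Cref{only_cyclic_amenable_subgroups_c_simple} to this: it first shows $R_a(G) \subseteq Z(G) = \set{e}$ (because $\langle t \rangle R_a(G)$ is amenable, hence cyclic, hence abelian, for every $t$), and then rules out nontrivial point stabilizers for $G \actson \boundary_F G$ using Frol\'ik's theorem, \Cref{lemma_furstenberg_stabilizers_intersection_conjugacy_class}, and the amenability of the stabilizer uniformly recurrent subgroup; only inside that argument does a Chabauty-convergence computation ($r^k s r^{-k} = s^{m^k}$) appear, and it is powered by the boundary machinery together with the ``only cyclic amenable subgroups'' hypothesis, not by $Z(G) = \set{e}$ alone. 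Having proved $G$ C*-simple, the paper then gets C*-simplicity of any nontrivial normal $N$ from \cite[Theorem~1.4]{breuillard_kalantar_kennedy_ozawa_c_simplicity}, which also sidesteps your degenerate ``$N$ cyclic'' case; as written, your treatment of that case is circular, since it invokes nonamenability (indeed C*-simplicity) of $G$ before it has been established --- the clean route is to note that a nontrivial cyclic normal subgroup would lie in $R_a(G)$, which the argument above shows is trivial.
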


	\begin{remark}
		It is worth noting that this last requirement that $G$ should have no two elements of finite coprime order is true for a large class of groups, including torsion-free groups and $p$-groups.
	\end{remark}	
	
	Whether or not $G$ has trivial center is surprisingly sufficient in determining whether $G$ is C*-simple or not. In the case of countable groups, \cite[Theorem~6.12]{breuillard_kalantar_kennedy_ozawa_c_simplicity} tells us it suffices to prove that $R_a(G) \subseteq Z(G)$. The argument we present here avoids countability, but requires a bit of a detour into theory on the Furstenberg boundary.
	
	\begin{lemma}
	\label{lemma_furstenberg_stabilizers_intersection_conjugacy_class}
		Let $G$ denote any discrete group, and let $x \in \boundary_F G$ be arbitrary. Letting $G_x$ denote the point-stabilizer of $x$, if $s \in G_x$ is nontrivial, and $y_1, \dots, y_n \in \boundary_F G$, then $G_{y_1} \cap \dots \cap G_{y_n}$ always contains some conjugate of $s$.
	\end{lemma}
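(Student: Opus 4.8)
The plan is to reduce the statement to a single ``simultaneous contraction'' fact about the boundary action. Unwinding the conclusion: a conjugate $tst^{-1}$ lies in $G_{y_1} \cap \dots \cap G_{y_n}$ precisely when $tst^{-1}y_i = y_i$ for every $i$, which is equivalent to $t^{-1}y_i \in \Fix_{\boundary_F G}(s)$ for every $i$. So it suffices to produce a single $g \in G$ with $g y_i \in \Fix_{\boundary_F G}(s)$ for all $i$, and then take $t = g^{-1}$.

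The crucial structural input is that $U \defeq \Fix_{\boundary_F G}(s)$ is a \emph{nonempty open} subset of $\boundary_F G$: it is nonempty because $s \in G_x$ forces $x \in U$, and it is in fact clopen by \Cref{frolik_theorem_furstenberg_boundary}. This is the point on which everything hinges — without openness of $\Fix(s)$ there is nothing to push the points $y_i$ into, and the argument collapses.

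With this in hand I would exploit that $\boundary_F G$ is a $G$-boundary. Set $\mu \defeq \frac{1}{n}\sum_{i=1}^n \delta_{y_i} \in P(\boundary_F G)$. By strong proximality there is a net $(g_\alpha) \subseteq G$ and a point $z \in \boundary_F G$ with $g_\alpha \mu \to \delta_z$ weak*; by minimality the orbit of $z$ is dense, so there is $h \in G$ with $hz \in U$, and then (by weak*-continuity of the $G$-action on $P(\boundary_F G)$) $hg_\alpha \mu \to \delta_{hz}$. Choosing $f \in C(\boundary_F G)$ with $0 \le f \le 1$, $f(hz) = 1$, and $\supp f \subseteq U$ (Urysohn, using that $U$ is open and $\boundary_F G$ is normal), we get $(hg_\alpha\mu)(U) \ge \int f \, d(hg_\alpha\mu) \to f(hz) = 1$. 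But $(hg_\alpha\mu)(U)$ takes only finitely many values (the partial sums of the atom masses of $\mu$), and equals $1$ exactly when $hg_\alpha y_i \in U$ for every $i$; hence it is eventually equal to $1$. Fixing such an $\alpha$ and setting $g \defeq hg_\alpha$ gives $gy_i \in U$ for all $i$, as required, and $t \defeq g^{-1}$ finishes the proof.

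The argument is essentially routine once Frolík's theorem is invoked; the only mildly delicate step is the last one, where weak* lower semicontinuity on the open set $U$ gives only $\liminf (hg_\alpha\mu)(U) \ge 1$, and one must combine this with the discreteness of the range of $(hg_\alpha\mu)(U)$ to conclude that \emph{all} $n$ points are carried into $U$ by a single group element, rather than merely a large fraction of them.
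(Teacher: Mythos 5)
Your proof is correct and takes essentially the same route as the paper: the paper simply cites \cite[Lemma~3.7]{breuillard_kalantar_kennedy_ozawa_c_simplicity} with exactly your choices $U = \Fix(s)$ (clopen by Frol\'{i}k), $\mu = \frac{1}{n}(\delta_{y_1} + \dots + \delta_{y_n})$, and $\varepsilon = \frac{1}{n}$, and then performs the same conjugation step. The only difference is that you re-derive the content of that cited lemma inline (strong proximality plus minimality plus the weak* estimate on the open set $U$), which is the standard argument and is carried out correctly.
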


	\begin{proof}
		This is a special case of \cite[Lemma~3.7]{breuillard_kalantar_kennedy_ozawa_c_simplicity} obtained by setting $U = \Fix(s)$ (necessarily clopen - see \Cref{frolik_theorem_furstenberg_boundary}), $\varepsilon = \frac{1}{n}$, and $\mu = \frac{1}{n} (\delta_{y_1} + \dots + \delta_{y_n})$. Our end result is that there is some $r \in G$ with $ry_i \in \Fix(s)$ for all $i$, or in other words, $r^{-1}sr \in G_{y_i}$ for all $i$.
	\end{proof}
	
	\begin{proposition}
	\label{only_cyclic_amenable_subgroups_c_simple}
		Assume $G$ has the property that any amenable subgroup is cyclic. Then $G$ is C*-simple if and only if $Z(G) = \set{e}$.
	\end{proposition}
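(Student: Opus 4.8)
The plan is to prove the contrapositive of the nontrivial direction: if $Z(G) \neq \set{e}$, then $G$ is not C*-simple, which is immediate since any nontrivial central element generates a nontrivial amenable (indeed abelian) normal subgroup contained in the amenable radical, so $R_a(G) \neq \set{e}$ and $G$ is not C*-simple. The real content is the forward direction: assuming every amenable subgroup of $G$ is cyclic and $Z(G) = \set{e}$, I must show $G$ is C*-simple. By \Cref{c_simplicity_equivalence_dynamical}, it suffices to show the action of $G$ on $\boundary_F G$ is free. Suppose not, so there is some nontrivial $s \in G$ with a fixed point $x \in \boundary_F G$, i.e. $s \in G_x$.

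The key idea is to use \Cref{lemma_furstenberg_stabilizers_intersection_conjugacy_class} to show that $G$ has many point-stabilizers forced to overlap, and then leverage the ``only cyclic amenable subgroups'' hypothesis to derive a contradiction with $Z(G) = \set{e}$. First I would observe that each point-stabilizer $G_y$ is amenable — this is because the action $G \actson \boundary_F G$ is a boundary action, so stabilizers are amenable (the quasi-orbit / Furstenberg-boundary stabilizer is amenable; cf. the theory behind \Cref{c_simplicity_equivalence_intrinsic}, since the stabilizer URS associated to $\boundary_F G$ is amenable). Hence each $G_y$ is cyclic by hypothesis. Now apply \Cref{lemma_furstenberg_stabilizers_intersection_conjugacy_class}: for any $y \in \boundary_F G$, the group $G_x \cap G_y$ contains a conjugate of $s$, hence is nontrivial; being a subgroup of the cyclic group $G_y$, it is itself cyclic, and a nontrivial subgroup of a cyclic group sharing the conjugate of $s$ with $G_x$. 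The plan is to combine these intersection facts across all $y$ to pin down a nontrivial element that commutes with everything in $G$.

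More precisely, I would argue as follows. Fix the nontrivial $s \in G_x$. For each $t \in G$, the point $tx$ also has $G_{tx} = t G_x t^{-1}$ cyclic and, by the lemma applied with the points $x$ and $tx$, the intersection $G_x \cap G_{tx}$ is a nontrivial subgroup of the cyclic group $G_x$. Since $G_x$ is cyclic, all its nontrivial subgroups intersect nontrivially with one another when $G_x$ is infinite cyclic, and when $G_x$ is finite cyclic they intersect nontrivially unless their orders are coprime — this is exactly where I expect the argument to require care, and where the hypothesis on \Cref{only_cyclic_amenable_subgroups_plump} about ``no two elements of finite coprime order'' would enter if we needed it, but here for \Cref{only_cyclic_amenable_subgroups_c_simple} itself I suspect one argues directly. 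The cleanest route: let $C = G_x$, a cyclic group containing the nontrivial $s$; I claim some nontrivial power of a generator of $C$ is central in $G$. For any $t \in G$, $t C t^{-1} = G_{tx}$ meets $C$ in a nontrivial subgroup, and since both are cyclic this forces a uniform nontrivial element of $C$ to be normalized — iterating, one shows the unique smallest nontrivial ``characteristic'' piece of $C$ is fixed under all conjugations, hence central. This contradicts $Z(G) = \set{e}$. The main obstacle is making the coprimality/torsion bookkeeping rigorous — handling the case where $G_x$ is finite and different conjugates could a priori have coprime orders — and I would resolve it by showing that the element $s$ (or a fixed power of it) itself lies in every $G_y$ up to conjugacy in a compatible way, then using minimality of $\boundary_F G$ together with \Cref{frolik_theorem_furstenberg_boundary} (so that $\Fix(s)$ is clopen and its $G$-translates cover $\boundary_F G$) to upgrade ``$s$ commutes with a large set'' to ``$s$ is central.''
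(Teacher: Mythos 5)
Your reduction to freeness of $G \actson \boundary_F G$, the observation that stabilizers are amenable hence cyclic, and the use of \Cref{lemma_furstenberg_stabilizers_intersection_conjugacy_class} all match the paper's setup, but the core of your argument has a genuine gap. From the fact that $G_x \cap tG_xt^{-1}$ is nontrivial for every $t$, you claim that a ``uniform nontrivial element of $C = G_x$ is normalized'' and that ``the unique smallest nontrivial characteristic piece of $C$'' is fixed by all conjugations, hence central, contradicting $Z(G) = \set{e}$. Neither step is valid as a group-theoretic inference. If $C$ is infinite cyclic it has no smallest nontrivial subgroup, and a commensurated infinite cyclic subgroup need not have any central power: with the Baumslag--Solitar relation $rsr^{-1} = s^2$, every intersection $\langle s \rangle \cap r^k \langle s \rangle r^{-k}$ is nontrivial, yet no power of $s$ is central. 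This commensuration scenario $rsr^{-1} = s^m$, $|m| \geq 2$, is precisely the configuration your intersections cannot exclude, and it is not killed by producing a central element. If instead $C$ is finite cyclic of composite order, distinct nontrivial subgroups can intersect trivially (the subgroups of orders $2$ and $3$ in $\Z/6$), so pairwise nontriviality of the intersections does not pin down a common nontrivial element either; your closing appeal to minimality and \Cref{frolik_theorem_furstenberg_boundary} to ``upgrade to central'' is not an argument.

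The paper's proof supplies exactly the two inputs your sketch is missing. First, trivial center is upgraded to trivial amenable radical using the hypothesis itself: for any $t \in G$, the subgroup $\langle t \rangle R_a(G)$ is amenable, hence cyclic, hence abelian, so $R_a(G) \subseteq Z(G) = \set{e}$. Second, assuming some stabilizer $G_x = \langle s \rangle$ is nontrivial, the contradiction is not with centrality but with the stabilizer dynamics: if $G_x$ is finite, then $\bigcap_y G_y = R_a(G) = \set{e}$ together with finiteness of $G_x$ gives finitely many points $y_1, \dots, y_n$ with $G_x \cap G_{y_1} \cap \dots \cap G_{y_n} = \set{e}$, directly contradicting \Cref{lemma_furstenberg_stabilizers_intersection_conjugacy_class}; if $G_x$ is infinite cyclic, one finds $y$ with $G_x \cap G_y = \langle s^n \rangle$, $|n| \geq 2$, so the lemma yields $rsr^{-1} = s^m$ with $|m| \geq 2$, whence $r^k \langle s \rangle r^{-k} \to \set{e}$ in the Chabauty topology, contradicting the fact that the point-stabilizers form a nontrivial amenable uniformly recurrent subgroup. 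Note also that the ``no two elements of finite coprime order'' hypothesis you wondered about plays no role in this proposition; it is only needed later, in \Cref{only_cyclic_amenable_subgroups_plump}, to ensure that a product of two finite cyclic groups is non-cyclic.
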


	\begin{proof}
		As the center is always an amenable normal subgroup, any C*-simple group $G$ must have trivial center. Conversely, assume $G$ has trivial center. We will first show that $G$ has trivial amenable radical. Given any $t \in G$, we have that $(\left<t\right> R_a(G)) / R_a(G) \isoto \left<t\right> /(\left<t\right> \cap R_a(G))$, which is amenable, and so by extension, $\left<t\right> R_a(G)$ is amenable, thus cyclic. This shows $t$ commutes with every element of $R_a(G)$. Since $t$ was arbitrary, $R_a(G) \subseteq Z(G) = \set{e}$.
		
		Now we wish to show that none of the point-stabilizers $G_x$ for $x \in \boundary_F G$ can be nontrivial. Assume otherwise. Recall that $G_x$ is always amenable - see, for example \cite[Proposition~2.7]{breuillard_kalantar_kennedy_ozawa_c_simplicity}. This tells us that $G_x = \left<s\right>$ for some $x \in \boundary_F G$ and $s \neq e$. If $G_x$ were finite, it follow from $\bigcap_{y \in \boundary_F G} G_y = R_a(G) = \set{e}$ that there are $y_1, \dots, y_n \in \boundary_F G$ with $G_x \cap G_{y_1} \cap \dots \cap G_{y_n} = \set{e}$. This contradicts \Cref{lemma_furstenberg_stabilizers_intersection_conjugacy_class}. If $G_x$ is infinite cyclic, this tells us that there is some $y \in \boundary_F G$ with $G_x \neq G_y$. Without loss of generality, $G_x \not \subseteq G_y$, and so $G_x \cap G_y = \left<s^n\right>$ for some $\card{n} \geq 2$. Again, \Cref{lemma_furstenberg_stabilizers_intersection_conjugacy_class} gives us that there is some $r \in G$ with $rsr^{-1} = s^m$ for some $\card{m} \geq 2$. It is easy to show that, inductively, $r^k s r^{-k} = s^{m^k}$, and so $r^k \left<s\right> r^{-k}$ converges to $\set{e}$ in the Chabauty topology. This can never happen if $G_x \neq \set{e}$, as $\set{G_x}_{x \in \boundary_F G}$ is an amenable uniformly recurrent subgroup - see \cite[Remark~4.3]{kennedy_c_simplicity_intrinsic}.
	\end{proof}

	\begin{proof}[Proof of \Cref{only_cyclic_amenable_subgroups_plump}]
		By \Cref{only_cyclic_amenable_subgroups_c_simple}, any such group is C*-simple. Assume $N$ is a nontrivial normal subgroup, and $C_G(N) \neq \set{e}$. We know that $G$ being C*-simple implies $N$ is C*-simple by \cite[Theorem~1.4]{breuillard_kalantar_kennedy_ozawa_c_simplicity}, and so $Z(N) = N \cap C_G(N)$ is trivial. Thus, if we choose nontrivial $x \in N$ and $y \in C_G(N)$, then $\left<x,y\right> \isoto \left<x\right> \times \left<y\right>$. Such a group is both amenable and non-cyclic, contradicting our assumption. Hence, any nontrivial normal subgroup has trivial centralizer, and so by \Cref{normal_plump_subgroup_amrutam_equivalence}, we are done.
	\end{proof}
	
	Recall that the free Burnside group $B(m,n)$ is the universal group on $m$ generators satisfying $x^n = e$ for all elements $x$ in the group. The Burnside problem, which was one of the largest open problems in group theory, asked whether such groups are always finite. The answer, as it turns out, is no, and in addition, some of these groups are C*-simple - see \cite[Corollary~6.14]{breuillard_kalantar_kennedy_ozawa_c_simplicity}. In particular, they remark that for any $m \geq 2$ and $n$ odd and sufficiently large, any non-cyclic subgroup contains a copy of the non-amenable group $B(2,n)$. Hence, we obtain the following:
	
	\begin{example}
		Assume $m \geq 2$ and $n$ is prime and sufficiently large. Then any non-trivial normal subgroup of $B(m,n)$ is relatively C*-simple.
	\end{example}

	\subsection{A remark on Thompson's group $F$}
	
	Thompson's group $F$ was the original candidate counterexample for the now-disproven Day-von Neumann conjecture, which stated that a group is non-amenable if and only if it contains a copy of $\F_2$, the free group on two generators. A good introduction to this, and related groups, can be found in \cite{cannon_floyd_parry_thompsons_groups}. It is known that $F$ does not contain a copy of $\F_2$, but whether or not it is amenable is still a large open question in group theory. However, it is known that $F$ is non-amenable if and only if it is C*-simple - see \cite[Theorem~1.6]{le_boudec_matte_bon_thompsons_groups}. Hence, with a bit of extra work, we obtain the following equivalence:
	
	\begin{theorem}
		Thompson's group $F$ is non-amenable if and only if its derived subgroup $[F,F]$ is relatively C*-simple in $F$.		
	\end{theorem}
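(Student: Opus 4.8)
The plan is to reduce the statement to \Cref{normal_plump_subgroup_amrutam_equivalence} and \Cref{normal_plump_subgroup_relative_simplicity_equivalence} together with the theorem of Le Boudec and Matte Bon \cite[Theorem~1.6]{le_boudec_matte_bon_thompsons_groups}. Since $[F,F]$ is a normal subgroup of $F$, \Cref{normal_plump_subgroup_relative_simplicity_equivalence} identifies relative C*-simplicity of $[F,F]$ in $F$ with plumpness, and then \Cref{normal_plump_subgroup_amrutam_equivalence} (items $(1)$ and $(5)$) says that $[F,F]$ is relatively C*-simple in $F$ if and only if $F$ is C*-simple and $C_F([F,F]) = \set{e}$. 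On the other hand, \cite[Theorem~1.6]{le_boudec_matte_bon_thompsons_groups} tells us that $F$ is C*-simple if and only if it is non-amenable. Hence the equivalence in the statement will follow as soon as we know, unconditionally, that $C_F([F,F]) = \set{e}$: the forward direction then reads ``$F$ non-amenable $\implies$ $F$ C*-simple $\implies$ $[F,F]$ relatively C*-simple'', and the reverse reads ``$[F,F]$ relatively C*-simple $\implies$ $F$ C*-simple $\implies$ $F$ non-amenable'' (the last implication being immediate, since no nontrivial group that is C*-simple can be amenable).

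So the one substantive point is the purely group-theoretic fact that the derived subgroup of $F$ has trivial centralizer. I would recall two classical facts about $F$ (see \cite{cannon_floyd_parry_thompsons_groups}): that $[F,F]$ is an infinite simple group, and that every nontrivial normal subgroup of $F$ contains $[F,F]$ (equivalently, since $F/[F,F] \isoto \Z^2$, every proper quotient of $F$ is abelian). Because the centralizer of a normal subgroup is again normal, $C_F([F,F]) \normal F$; if it were nontrivial it would contain $[F,F]$, forcing $[F,F]$ to be abelian, which contradicts $[F,F]$ being infinite and simple. Therefore $C_F([F,F]) = \set{e}$, and the proof is complete by combining the displayed equivalences.

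Alternatively, and more self-containedly, one can verify $C_F([F,F]) = \set{e}$ from the faithful action of $F$ on $(0,1)$ by homeomorphisms: $[F,F]$ consists precisely of the elements that are the identity near $0$ and near $1$, and for any nonempty open $I \subseteq (0,1)$ there is a nontrivial element of $[F,F]$ supported inside $I$. Given a nontrivial $g \in F$, choose $p \in (0,1)$ with $g(p) \neq p$ and an interval $I \ni p$ small enough that $g(I) \cap I = \emptyset$; picking a nontrivial $h \in [F,F]$ supported in $I$, the elements $h$ and $ghg^{-1}$ have disjoint supports, hence are unequal, so $g \notin C_F([F,F])$.

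I expect the only mild obstacle to be pinning down the right structural statement about $F$ in the literature, together with the observation that simplicity of $[F,F]$ alone is \emph{not} enough to conclude $C_F([F,F]) = \set{e}$ (as the example $\Z \times A_5$ shows), so one genuinely needs the stronger information about normal subgroups of $F$ or the explicit homeomorphism model. Everything after that is bookkeeping, because all of the analytic content has been absorbed into \Cref{normal_plump_subgroup_amrutam_equivalence}, \Cref{normal_plump_subgroup_relative_simplicity_equivalence}, and \cite{le_boudec_matte_bon_thompsons_groups}.
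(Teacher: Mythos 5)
Your proof is correct, but the way you handle the one substantive step differs from the paper. Both arguments reduce, via \Cref{normal_plump_subgroup_relative_simplicity_equivalence} and \Cref{normal_plump_subgroup_amrutam_equivalence}, to ``$F$ C*-simple and $C_F([F,F]) = \set{e}$'' and then invoke \cite[Theorem~1.6]{le_boudec_matte_bon_thompsons_groups}. You then prove $C_F([F,F]) = \set{e}$ \emph{unconditionally}, using Thompson-specific structure theory: either that $[F,F]$ is infinite simple together with the fact that every nontrivial normal subgroup of $F$ contains $[F,F]$, or the explicit piecewise-linear action on $(0,1)$ with a supported-commuting argument; both versions are sound (in the homeomorphism argument, note that $ghg^{-1}$ is supported in $g(I)$, disjoint from $I$, so it cannot equal the nontrivial $h$). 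The paper instead derives triviality of the centralizer only \emph{conditionally} on $F$ being C*-simple, and does so by purely general C*-simplicity machinery: writing $M = C_F([F,F])$, which is normal, it applies \cite[Theorem~1.4]{breuillard_kalantar_kennedy_ozawa_c_simplicity} to conclude both $N = [F,F]$ and $M$ are C*-simple, uses $N \cap M = Z(N) = \set{e}$ and the second isomorphism theorem to embed $M$ into the abelian group $F/[F,F]$, and concludes $M = \set{e}$ since an abelian C*-simple group is trivial. So the trade-off is: your route needs the classical Cannon--Floyd--Parry facts about $F$ (or the homeomorphism model) but yields the stronger, unconditional statement $C_F([F,F]) = \set{e}$, while the paper's route needs nothing about $F$ beyond Le Boudec--Matte Bon and generalizes verbatim to any group whose non-amenability is equivalent to C*-simplicity, at the cost of only establishing the centralizer statement under the C*-simplicity hypothesis (which is all the equivalence requires).
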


	\begin{proof}
		Relative simplicity of $[F,F]$ in $F$ would imply that both $[F,F]$ and $F$ are C*-simple, in particular non-amenable. It remains to prove the converse.
		
		Assume $F$ is non-amenable, hence C*-simple. It is known that every proper quotient of $F$ is abelian \cite[Theorem~4.3]{cannon_floyd_parry_thompsons_groups}, or equivalently, $[F,F] \subseteq N$ for any normal subgroup $N \normal F$ with $N \neq \set{e}$. In particular, we must have that $C_F([F,F]) = \set{e}$, otherwise $[F,F]$ would be abelian (and thus $F$ would be amenable). By \Cref{normal_plump_subgroup_amrutam_equivalence}, we are done.
	\end{proof}

	This shows, for example, that to prove amenability of $F$, it would suffice to construct a non-faithful unital completely positive map $\phi : C^*_\lambda(F) \to B(\mathcal{H})$ that is a *-homomorphism on $C^*_\lambda([F,F])$.

	\bibliographystyle{amsalpha}
	\bibliography{relatively_c_simple_subgroups}
\end{document}